\DeclareMathAlphabet{\mathbbold}{U}{bbold}{m}n
\theoremstyle{plain}\newtheorem{theorem}{Theorem}[section]\newtheorem{lemma}[theorem]{Lemma}\newtheorem{proposition}[theorem]{Proposition}}\numberwithin{equation}{section}{\theoremstyle{remark}\newtheorem{remark}{\bf Remark}}
\title{Infinitely Many Solutions for a Boundary Yamabe problem}
\author[L. Battaglia]{Luca Battaglia}
\address{\noindent  Dipartimento di Matematica e Fisica, Universit\`a degli Studi Roma Tre}
\email{luca.battaglia@uniroma3.it}
\author[Y. Pu]{Yixing Pu}
\address{\noindent  School of Mathematics and Statistics, East China Normal University, Shanghai, China }
\email{yxpu@stu.ecnu.edu.cn}
\author[G. Vaira]{Giusi Vaira}
\address{\noindent  Dipartimento di Matematica, Universit\`a degli Studi di Bari Aldo Moro, Italy }
\email{giusi.vaira@uniba.it}
\thanks{Work partially supported by 
the MUR-PRIN-P2022YFAJH ``Linear and Nonlinear PDE's: New directions and Applications", by the MUR-PRIN-2022AKNSE ``Variational and Analytic aspects of Geometric PDEs", by the INdAM-GNAMPA project ``Fenomeni non lineari: problemi locali e non locali e loro applicazioni",  CUP E5324001950001 and by China Scholarship Council (No. 202306140126)}
\date{}
\begin{document}

\allowdisplaybreaks{}
\everymath{\displaystyle}

\maketitle

\begin{abstract}
We consider the classical geometric problem of prescribing the scalar and the boundary mean curvature problem in  the unit ball $\mathbb{B}^n$ endowed with the standard Euclidean metric. We will deal with the case of negative scalar curvature showing the existence of infinitely many non-radial positive solutions when $N\geq 5$. This is the first result of existence of solutions in the case of negative prescribed scalar curvature problem in higher dimensions.
\end{abstract}

\section{Introduction}
One of the most important problems in differential geometry is the so-called prescribed curvature problem, i.e.  {\em given $(M,g)$ be a Riemannian closed manifold of dimension $N\ge3$ and a smooth function $\mathbf{K}:M\to\mathbb R$, finding a metric $\tilde g$ conformal to the original metric $g$ whose scalar curvature is $\mathbf{K}$} (see \cite{SchoenYau_1994,ChenLi_2010,Hebey_2000,KazdanWarner_1974}).\\  As it is well known, being $\tilde g=u^{\frac4{N-2}}g$, this is equivalent to finding a positive solution of the semi-linear elliptic equation:
\begin{align}\label{eq.curvature_problem_no_boundary}
-\frac{4(N-1)}{N-2}\Delta_gu+k_gu=\mathbf{K}u^{\frac{N+2}{N-2}},\qquad~ u>0,\qquad&\text{in}~M,
\end{align}
where $k_g$ denotes the scalar curvature of $M$ with respect to $g$ and $\Delta_g$ is the Beltrami-Laplace operator.

If $M$ is a manifold with boundary, given a smooth function $\mathbf{H}:\partial M\to\mathbb R$, it is natural to ask if there exists a conformal metric whose scalar curvature and boundary mean curvature can be prescribed as $\mathbf{K}$ and $\mathbf{H}$ respectively. As in \eqref{eq.curvature_problem_no_boundary}, the geometric problem turns out to be equivalent to a semi-linear elliptic equation with a nonlinear Robin boundary condition:
\begin{align}\label{eq.basicly_original}
\left\{\begin{array}{ll}
-\frac{4(N-1)}{N-2}\Delta_gu+k_gu=\mathbf{K}u^{\frac{N+2}{N-2}},\,\quad u>0,&\text{in }M,\\\\\frac2{N-2}\partial_\nu u+h_gu=\mathbf{H}u^{\frac N{N-2}},&\text{on }\partial M,
\end{array}\right.
\end{align}
where, $k_g$ and $h_g$ denote the scalar and boundary mean curvatures of $M$ with respect to $g$ and $\nu$ is the outward normal unit vector with respect to the metric $g$.

When $\mathbf{K}$ and $\mathbf{H}$ are constants, the problem is known as the Escobar problem, since it was first proposed and studied by Escobar in 1992 in the case $\mathbf{H}=0$ (\cite{Escobar_1992,Escobar_1996}) and in the case $\mathbf{K}=0$ (\cite{Escobar_1992a}). 
Afterwards, many subsequent contributions for this problem are given in \cite{Almaraz, BrendleChen, MayerNdiaye_2017,Marques_2005}.

The case of non-zero constants $\mathbf{K}$ and $\mathbf{H}$ (with $\mathbf{K}>0$) it was first studied by Han \& Li in \cite{HanLi_1999,HanLi_2000} and then completed by Chen, Ruan \& Sun in \cite{ChenRuanSun}.\\ In all these results, the existence of solutions for the problem \eqref{eq.basicly_original} strongly depends on the dimension of the manifold, on the properties of the boundary (i.e. being umbilical or not) and on vanishing properties of the Weyl tensor.\\

The case of non-constant functions $\mathbf{K}$ and $\mathbf{H}$ is less studied and all the available results are for special manifolds (tipically the unit ball and the half-sphere). We refer to \cite{Li_1995,Li_1996,AhmedouBenAyed_2021,BenAyedElMehdiOuldAhmedou_2005,BenAyedElMehdiAhmedou_2002} for the case $\mathbf{H}=0$ and to \cite{AbdelhediChtiouiAhmedou_2008,DjadliMalchiodiAhmedou_2004,XuZhang_2016,ChangXuYang_1998} for  the case $\mathbf{K}=0$.\\

When both $\mathbf{K}$ and $\mathbf{H}$ are not constant, the problem becomes more difficult. Djadli, Malchiodi \& Ahmedou \cite{DjadliMalchiodiOuldAhmedou_2003} considered problem \eqref{eq.basicly_original} on the three-dimensional half-sphere and proved some existence and compactness results. Chen, Ho \& Sun \cite{ChenHoSun_2018} proved the existence of solutions for \eqref{eq.basicly_original} when $\mathbf{K}$ and $\mathbf{H}$ are negative functions and the boundary $\partial M$ has negative Yamabe invariant.
Ambrosetti, Li \& Malchiodi \cite{AmbrosettiLiMalchiodi_2002} considered the perturbation problem in the unit ball when both $\mathbf{K}$ and $\mathbf{H}$ are positive. That is, they consider $\mathbf{K} = \mathbf{K}_0 + \varepsilon \mathcal{K} > 0$ and $\mathbf{H} = \mathbf{H}_0 + \varepsilon \mathcal{H} > 0$, where $\mathbf{K}_0 > 0$, $\mathbf{H}_0 > 0$, $\varepsilon > 0$ is small, and $\mathcal{K}$ and $\mathcal{H}$ are smooth functions. They proved an existence result when $\mathcal{K}$ and $\mathcal{H}$ satisfy some conditions.\\
The first result concerning the case of negative prescribed scalar curvature (namely $\mathbf{K}<0$) is due to Cruz-Bl\'azquez, Malchiodi \& Ruiz in \cite{CruzMalchiodiRuiz}. They introduce the scaling invariant
quantity $$\mathfrak D:=\sqrt{N(N-1)}\frac{H(p)}{\sqrt{|\mathbf K(p)|}},\quad p\in\partial M$$ and established the existence of a solution to \eqref{eq.basicly_original} whenever $\mathfrak D<1$ along the whole boundary. On the other hand, if $\mathfrak D>1$ at some boundary points, they got a solution only in a three dimensional manifold, for a generic choice of $\mathbf K$ and $\mathbf H$.\\ Their proof relies on a careful blow-up analysis: first they show that the blow-up phenomena occur
at boundary points $p$ with $\mathfrak D\geq1$, with different behaviours depending on whether $\mathfrak D=1$ 1or $\mathfrak D>1$. To deal with the loss of compactness at points with $\mathfrak D>1$, where bubbling of solutions occurs, it is shown that in dimension three all the blow-up points are isolated and simple. As a consequence, the number of blow-up points is finite and the blow-up is excluded via integral estimates. In that regard, $N= 3$ is the maximal dimension for which one
can prove that the blow-up points with $\mathfrak D>1$ are isolated and simple for generic choices of $\mathbf K$ and $\mathbf H$. In the case of closed surfaces such a property is assured up to dimension four (see \cite{Li_1996}) but, as observed in \cite{DjadliMalchiodiOuldAhmedou_2003}, the
presence of the boundary produces a stronger interaction of the bubbling solutions with the function $\mathbf K$.\\
A linearly perturbed problem was considered in \cite{Cruz-BlazquezPistoiaVaira_2022} and it is shown that, at least for $4\leq N\leq 7$  the blow-up points are not anymore isolated and simple since a cluster-type solution exists. Moreover in \cite{Cruz-BlazquezVaira_2025} it is addressed the question of existence of solutions for problem \eqref{eq.basicly_original} in a perturbative setting.
Afterwards, in \cite{BattagliaCruz-BlazquezPistoia_2023}, the authors considered the perturbation problem in the ball under the condition $\mathbf{K}<0$ and $\mathbf{H} > 0$. i.e., $\mathbf{K} = \mathbf{K}_0 + \varepsilon \mathcal{K} < 0$ and $\mathbf{H} = \mathbf{H}_0 + \varepsilon \mathcal{H} > 0$, where $\mathbf{K}_0 < 0$, $\mathbf{H}_0 > 0$, $\varepsilon > 0$ is small, and $\mathcal{K}$ and $\mathcal{H}$ are smooth functions showing the existence of solutions with some constraint of $\mathcal{K}$ and $\mathcal{H}$.\\
At this time, as far as we know, it remains completely open whether solutions exist in any kind of manifolds with $\mathbf K<0$ and $\mathbf H$ whatever but non-zero and not constants.\\
In this paper we consider the problem \eqref{eq.basicly_original} in the unit ball, i.e. 
\begin{align}\label{eq.basicly_original_Ball}
    \left\{\begin{array}{ll}
    -\frac{4(N-1)}{N-2}\Delta u=\mathbf{K} u^{\frac{N+2}{N-2}},\,u>0,&\text{in}~\mathbb B^N,\\[10pt]\frac2{N-2}\partial_\nu u+u=\mathbf{H}u^{\frac N{N-2}},&\text{on}~\mathbb S^{N-1}.
    \end{array}\right. 
    \end{align}
and we  focus on the case $\mathbf{K}<0$ and $\mathbf{H} > 0$ are functions which satisfy some conditions showing the existence of infinitely many non-radial solutions to \eqref{eq.basicly_original_Ball}. The solutions we construct depend on a positive integer parameter $k\in\mathbb N$ and, as $k$ tends to $+\infty$, they concentrate on a circle of increasing radius (see Section \ref{sec.notaions} for further details).\\
This type of solutions are known in the case of closed manifold (see \cite{WeiYan_2010a}) and for problem \eqref{eq.basicly_original_Ball} with $\mathbf K=0$ and $\mathbf H>0$  in \cite{WangZhao_2013} (see also \cite{BianChenYang} for some general assumptions on $\mathbf H$ and again $\mathbf K=0$). \\ The basic idea, in order to consider problem \eqref{eq.basicly_original_Ball}, is to use the conformal equivalence between $\mathbb B^N$ and $\mathbb R^N_+$. Indeed, 
\eqref{eq.basicly_original_Ball} is equivalent to the problem:
\begin{align}\label{pb}
  \left\{
    \begin{array}{l}
      -\frac{4(N-1)}{N-2}\Delta u = -K(x) u^{\frac{N+2}{N-2}}, \quad \text{in}~\mathbb{R}_{+}^{N},\vspace{5pt}\\
      -\frac{2}{N-2}\partial_{x_N}u =  H(\bar x) u^{\frac{N}{N-2}}, \quad \text{on}~ \partial\mathbb{R}_{+}^{N},
    \end{array}
  \right.
\end{align}
where $ K:= - \mathbf K\circ\mathscr I >0$ and $H:=\mathbf H\circ\mathscr I >0$ are positive bounded functions in $\mathbb R_+^N$ and $\partial\mathbb R_+^N$ respectively and the conformal map $\mathscr I:\mathbb R^N_+\to\mathbb B^N$ is defined by
\begin{align}\label{eq.inversion_map}
  \mathscr I\left(\bar x,x_N\right)=\left(\frac{2\bar x}{|\bar x|^2+{(x_N+1)}^2},\frac{1-|{\bar x}|^2-x_N^2}{|\bar x|^2+{(x_N+1)}^2}\right),\quad x=(x_1,\ldots, x_N) = \left(\bar x,x_N\right)\in\partial\mathbb R^N_+\times(0,+\infty).\\
  \end{align}
In this paper we consider the simplest situation in which $K$ and $H$ are rotationally invariant, i.e. $ K(x)= K(|x|)$ and $H(\bar{x})= H(|\bar{x}|)$ are positive, bounded and assume that there is a constant $r_0 > 0$ such that 

$$ ({\bf K})  \qquad K(r)=K(r_0)-c_0|r-r_0|^m+\mathcal O(|r-r_0|^{m+\theta}),\quad r\in(r_0-\delta, r_0+\delta),$$
and
$$({\bf H})\qquad 
H(r)=H(r_0)-d_0|r-r_0|^n+\mathcal O(|r-r_0|^{n+\theta}),\quad r\in(r_0-\delta, r_0+\delta),$$
where  $\theta>0$, $\delta>0$  are constants and $m, n\in [2, N-2)$.\\
To make sure that such $m, n$ exist, we consider the problem for $N\geq 5$.\\
We also set
\begin{equation}\label{D}
\mathfrak D := \sqrt{N\left(N-1\right)}\frac{H(r_0)}{\sqrt{K(r_0)}}\end{equation}
and we assume that $\mathfrak D>1$.\\
Without loss of generality, we assume that $K(r_0)=1$. Moreover, by \eqref{D}, we get that
$$H(r_0):=\frac{\mathfrak D}{\sqrt{N(N-1)}}.$$

We finally let $\mathfrak m:=\min\{m, n\}$. Our main result is stated as follows.
\begin{theorem}\label{principale}
Suppose that $N \geq 5$. We let $K(|x|), H(|\bar{x}|)$ satisfy ({\bf K}) and ({\bf H})  respectively, with $m, n \in\left[2,N-2\right)$, and moreover:
$$\left\{\begin{array}{ll}c_0<0,d_0\in\mathbb R,&\mbox{if }m<n,\\c_0<0,d_0>0,&\mbox{if }m=n,\\c_0\in\mathbb R,d_0>0,&\mbox{if }m>n.\end{array}\right.$$
Then problem \eqref{pb} has infinitely many non-radial solutions.
 \end{theorem}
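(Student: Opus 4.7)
The plan is to adapt the Wei--Yan concentration argument \cite{WeiYan_2010a} (subsequently used in \cite{WangZhao_2013} for the boundary Yamabe problem with $\mathbf K=0$) to the present setting: for each sufficiently large integer $k$ I will construct a positive solution of \eqref{pb} which looks like a sum of $k$ copies of a standard bubble arranged with the symmetry of a regular $k$-gon in the boundary hyperplane; distinct values of $k$ will then produce geometrically distinct non-radial solutions.

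\textbf{Symmetric multi-bubble ansatz.} Exploiting the $O(N-1)$-invariance of $K(|x|)$ and $H(|\bar x|)$, I restrict to functions invariant under the finite subgroup $G_k:=\mathbb Z_k\times O(N-3)$ acting by rotation of angle $2\pi/k$ on $(x_1,x_2)$ and orthogonally on $(x_3,\dots,x_{N-1})$. Let $U_{\lambda,\xi}(x)=\lambda^{-(N-2)/2}U((x-\xi)/\lambda)$ denote the family of positive bubbles solving the limit problem
\[
-\tfrac{4(N-1)}{N-2}\Delta U=-U^{\frac{N+2}{N-2}}\ \text{in }\mathbb R^N_+,\qquad
-\tfrac{2}{N-2}\partial_{x_N}U=\tfrac{\mathfrak D}{\sqrt{N(N-1)}}U^{\frac{N}{N-2}}\ \text{on }\partial\mathbb R^N_+,
\]
whose existence for $\mathfrak D>1$ is by now classical and which concentrate at an interior point, the height $\xi_N>0$ of the center being linked to $\lambda$ through a natural ratio $\mu_0(\mathfrak D)$. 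The approximate solution is
\[
W_{r,\mu,\lambda}(x):=\sum_{j=1}^k U_{\lambda,\xi_j}(x),\qquad \xi_j=\bigl(r\cos\tfrac{2\pi j}{k},\,r\sin\tfrac{2\pi j}{k},\,0,\dots,0,\,\mu\bigr),
\]
which is $G_k$-invariant and depends on just three real parameters $(r,\mu,\lambda)$, with expected asymptotics $r\to r_0$, $\mu/\lambda\to\mu_0(\mathfrak D)$, and $\lambda\to 0$ as $k\to\infty$.

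\textbf{Lyapunov--Schmidt reduction and reduced functional.} Writing $u=W_{r,\mu,\lambda}+\phi$ with $\phi$ in the $G_k$-invariant subspace orthogonal to the natural kernel directions $\partial_r W,\partial_\mu W,\partial_\lambda W$, the first step is to invert the linearization at $W_{r,\mu,\lambda}$ on that orthogonal complement, in a suitable weighted Sobolev norm compatible both with the bulk equation on $\mathbb R^N_+$ and with the nonlinear Robin condition on $\partial\mathbb R^N_+$; this should produce a unique small $\phi=\phi_{r,\mu,\lambda}$ via a contraction argument. Plugging $W+\phi$ in the Euler functional gives a reduced energy $J_k(r,\mu,\lambda)$ whose critical points correspond to genuine solutions of \eqref{pb}. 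A careful asymptotic expansion of $J_k$ for $k\gg 1$ should exhibit the following leading terms: a baseline $k\,\mathcal E_0$ independent of the parameters; an inter-bubble interaction coming from nearest neighbours, of magnitude $k(\lambda/d_k)^{N-2}$ with $d_k:=2r\sin(\pi/k)\sim 2\pi r/k$, of definite sign; the curvature corrections $-\tilde c_0 k\,|r-r_0|^m\lambda^m$ and $-\tilde d_0 k\,|r-r_0|^n\lambda^n$ produced respectively by the expansions ({\bf K}) and ({\bf H}), with $\tilde c_0,\tilde d_0$ carrying the signs of $c_0,d_0$; and a quadratic-type contribution in $\mu-\mu_0(\mathfrak D)\lambda$. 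Under each of the three sign configurations assumed in the theorem, these terms should combine, for $\lambda$ tuned to an appropriate negative power of $k$ and $r$ close to $r_0$, to produce a min-max (or mountain-pass-type) critical point of $J_k$ inside a shrinking box around the leading-order configuration, valid for all $k$ large enough.

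\textbf{Main obstacle.} The main technical difficulties I expect are twofold. First, the uniform invertibility of the linearized operator on the orthogonal subspace as $k\to\infty$: the mutual distance $d_k\sim 1/k$ of the bubbles shrinks, so standard estimates must be refined to weighted norms adapted to the concentration set, while the nonlinear Robin condition forces simultaneous control of interior and trace terms. Second, the energy expansion has to be pushed to sufficient accuracy to resolve the competition between the inter-bubble interaction and the dominant curvature correction (of order $\mathfrak m=\min\{m,n\}$), and to verify that the leading-order reduced functional exhibits a nondegenerate critical configuration robust under the smaller remainder. The constraints $N\ge 5$ and $m,n\in[2,N-2)$ are precisely what ensures that this balance and the necessary expansions can be set up.
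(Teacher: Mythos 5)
Your high-level strategy coincides with the paper's: rescale, superpose $k$ Chipot--Shafrir--Fila bubbles on a regular $k$-gon near the concentration radius, run a Lyapunov--Schmidt reduction in weighted norms covering both the interior equation and the Robin trace, and locate a critical point of the reduced energy. But your ansatz has one parameter too many. The limit-problem solutions on $\mathbb{R}^N_+$ are classified: up to tangential translation and dilation they are restrictions of a single Aubin--Talenti bubble whose full-space center sits a fixed $\Lambda$- and $\mathfrak D$-dependent distance \emph{below} $\partial\mathbb{R}^N_+$, so the peak lies exactly on the boundary. There is no independent height parameter $\xi_N$ or $\mu$: the exact family is two-parameter, and accordingly the kernel of the linearized problem is spanned by the tangential translations $\mathfrak z^1,\dots,\mathfrak z^{N-1}$ and the dilation $\mathfrak z^0$, not by $\partial_{y_N}U$. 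The paper therefore works with only $(r,\Lambda)$ and two orthogonality conditions $Z_{i,1}$, $Z_{i,2}$. Introducing a normal displacement either forces you to use profiles that violate the boundary condition at $O(1)$ (killing the contraction) or to project onto a direction that is not in the kernel (so the Fredholm alternative for the projected linear problem fails).

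More seriously, your expansion of the curvature term is incorrect in a way that changes the mechanism. You write the $K$- and $H$-contributions as $-\tilde c_0\,k\,|r-r_0|^m\lambda^m$ and $-\tilde d_0\,k\,|r-r_0|^n\lambda^n$, but in the regime relevant to the reduction the bubble scale dominates the radial displacement $|r-r_0|$, so the leading curvature correction is \emph{constant in $r$}: after the scaling $\mu=k^{(N-2)/(N-2-\mathfrak m)}$ it reads $-\tfrac{1}{2^*}\tfrac{c_0}{\Lambda^m\mu^m}\mathfrak d_{3,N}+(N-2)\tfrac{d_0}{\Lambda^n\mu^n}\mathfrak d_{5,N}$, with the $r$-dependence appearing only at next order as a quadratic $\sim(\mu r_0-r)^2\Lambda^{2-j}\mu^{-j}$. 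This is not a cosmetic difference: the choice of $\mu$ is dictated by balancing the $r$-independent curvature term against the inter-bubble interaction $\sim B_1 k^{N-2}/(\Lambda r)^{N-2}$, and both scale as powers of $\mu$, whereas with your form the two leading effects live on different variables and cannot be balanced. Moreover, the sign hypotheses on $c_0,d_0$ enter precisely to make the coefficient of $\Lambda^{-j}\mu^{-j}$ positive (so that $\partial_\Lambda F_j$ vanishes at an interior $\Lambda_{0,j}>0$) and the quadratic coefficient in $(\mu r_0-r)^2$ positive (so that $r$ is trapped near $\mu r_0$); with your expansion the role of those signs, and hence the three cases distinguished in the theorem, would not be visible.
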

We outline that, going back to the conformal equivalence, Theorem \ref{principale} shows that problem \eqref{eq.basicly_original} in the unit ball with $\mathbf K<0$ has a solution for $N\geq 5$. We remark that this is the first existence result for problem \eqref{eq.basicly_original} in higher dimensions for the pure geometric case (not in a perturbative setting) even if in special cases of manifolds (i.e. in the unit ball).\\ 

Some remarks are in order.
\begin{remark}\label{R1}
The conditions $(\bf{K})$ and $(\bf{ H})$ are local conditions. Moreover, the sign conditions on $c_0$ and $d_0$ in Theorem \ref{principale}  mean that $K$ and $H$ have a local maximum or a local minimum at $r_0$ according to the sign of $c_0$ and $d_0$ respectively.\\ In particular, if $m<n$ then the prevailing phenomena is due to the prescribed scalar curvature and a solution like the one in Theorem \ref{principale} exists if $K$ has a local minimum at $r_0$ (so $-K$ has a local maximum) and $H$ is whatever.
If $m>n$ then we assume that $H$ has a local maximum at $r_0$ and $K$ is whatever and hence the boundary mean curvature determines the existence result. The case $m=n$ is the one in which there is a competition between the two effects and the existence is guaranteed if again $K$ has a local minimum at $r_0$ and $H$ has a local maximum at $r_0$. This last condition can be relaxed (see Remark \ref{ossprimo} in the end).
\end{remark}
\begin{remark}\label{R2}
The radial symmetry of $K$ and $H$ can be relaxed as usual. Indeed, it is, in our opinion, a suitable exercise to let $K:\mathbb R^N_+\to \mathbb R$ bounded and such that for any $(x', x'')\in\mathbb R^2\times\mathbb R^{N-2}$ with $x_N>0$ we have $$K(x', x'')=K(|x'|, x'')=K(r, x''),$$ while $H:\mathbb R^{N-1}\to \mathbb R$ bounded and such that for any $(x', \tilde x'')\in\mathbb R^2\times\mathbb R^{N-3}$ we have $$H(x',\tilde x'')=H(|x'|,\tilde x'')=H(r, \tilde x'').$$ If one assume that $K(r, x'')$ and $H(r, \tilde x'')$ have a common stable critical point at some $(r_0, \tilde x''_0)$ that satisfies suitable assumption then arguing as in \cite{BianChenYang} or as in \cite{PengWangWei} (by using local Pohozaev identities) the result will follow. Here we remark that the pure interest is in the geometrical problem, namely we are interested in looking for examples of existence of solutions for problem \eqref{eq.basicly_original} with $\mathbf{K}<0$ in higher dimensions.\\ We do not known if it is possible to relax over the assumptions. If $K$ and $H$ are not radial in the first two variables but only invariant by a rotation of an angle $\frac{2\pi}{k}$, we do not know how the reduced problem becomes. It would be interesting to investigate this point in the future.\end{remark}

In order to prove Theorem \ref{principale}, we will use a widely used technique in singularly perturbed elliptic problems which takes $k$ which is the number of the bubbles in the solutions as a large parameter. This technique was succefully used also in other contexts and for other problems.\\ Since we construct the solutions by using the so-called bubbles (see \eqref{eq.bubble_form}) then our results provide also an asymptotic behavior of the solutions that we look for (see \eqref{eq:W} and \eqref{eq:W1}).\\\\
The paper is organized as follows. In Section \ref{sec.notaions}, we obtain some preliminary estimates and we introduce the weighted space in which the reduction will be made. In Section \ref{riduzione}, we perform the finite-dimensional reduction studying the nonlinear projected problem. In Section \ref{ridotto}, we come to the variational reduction procedure and we prove Theorem \ref{principale}. Finally we give a list of some useful estimates in Section \ref{appendice}.

\section{Notations and Preliminaries}\label{sec.notaions}
In what follows we set $$c_N:=\frac{4(N-1)}{N-2}.$$ 
Let us fix a large positive integer $k\ge k_0$, for $k_0$ to be determined later, and a scaling parameter $\mu$ defined as $$\mu=k^{\frac{N-2}{N-2-\mathfrak m}}.$$
We remark that $\mu\to +\infty$ as $k\to +\infty$ and in what follows  $\mu$ is a scaling parameter. Indeed, using the transformation $u(y) \mapsto \mu^{\frac{N-2}{2}} u\left(\frac{y}{\mu}\right) $, we find that \eqref{pb} is equivalent to
\begin{align}\label{pbscal}
  \left\{
    \begin{aligned}
      -c_N\Delta u = - K\left(\frac{|y|}{\mu}\right) u^{\frac{N+2}{N-2}}, \quad &\text{in }\mathbb{R}_{+}^{N}, \vspace{5pt}\\
      -\frac{2}{N-2}\partial_{y_N}u= H\left(\frac{|\bar{y}|}{\mu}\right) u^{\frac{N}{N-2}}, \quad &\text{on } \partial\mathbb{R}_{+}^{N}.
    \end{aligned}
  \right.
\end{align}

We recall that for the problem
\begin{align}\label{eq:bubble}
  \left\{\begin{array}{ll}
  -c_N\Delta u=-u^{\frac{N+2}{N-2}},\,u>0,&\text{in }\mathbb R^N_+ ,\vspace{5pt}\\-\frac2{N-2}\partial_{y_N}u=\frac{\mathfrak{D}}{\sqrt{N(N-1)}} u^{\frac N{N-2}},&\text{on }\partial\mathbb R^N_+,\vspace{5pt} \\
  u \in D^{1,2}\left(\mathbb{R}_{+}^{n}\right),&
  \end{array}\right.
  \end{align}
a classification results is known (see \cite{ChipotShafrirFila_1996}) and all solutions of \eqref{eq:bubble} have the form
\begin{align}\label{eq.bubble_form}
  \begin{aligned}
    &U_{z,\Lambda}(\bar y,y_N):=\Lambda^{\frac{N-2}2}U_{\mathbf 0,1}\Bigl(\Lambda \bigl(y-z\bigr)\Bigr)=\frac{\alpha_N\Lambda^{\frac{N-2}{2}}}{{\left({\Lambda^2|\bar{y}-\bar{z}|}^2+{(\Lambda y_N+ \mathfrak{D})}^2 - 1\right)}^{\frac{N-2}{2}}},\\
\end{aligned}
\end{align}
where $\alpha_N:=({4N(N-1)})^{\frac{N-2}{4}}$ and for $\Lambda>0$, $z=(\bar z,0)\in\partial\mathbb R^N_+\times\{0\}$.\\ Recently, in \cite{Cruz-BlazquezPistoiaVaira_2022}, it was shown that $U_{\mathbf 0,1}$ is also non-degenerate, that is if we let $v\in D^{1, 2}(\mathbb R^N_+)$, a solution of the following the linearized problem 
\begin{equation}\label{lblin}
\left\{\begin{aligned}&-c_N\Delta v+\frac{N+2}{N-2}U_{\mathbf 0,1}^{\frac{4}{N-2}}v  =0\quad& \mbox{in}\, \mathbb R^{N}_+\\
&-\frac{2}{N-2}\partial_{y_N}v-\frac{N}{N-2}\frac{\mathfrak{D}}{\sqrt{N(N-1)}}U_{\mathbf 0,1}^{\frac{2}{N-2}}v=0
\quad &\mbox{on}\, \partial\mathbb R^N_+\end{aligned}\right.\end{equation}
 then $v$ is the linear combination of the functions 
\begin{equation}\label{zi}
\mathfrak z^i(y):=\frac{\partial U_{\mathbf 0,1}}{\partial y_i}=\alpha_N\frac{(2-N)y_i}{\left(|\bar y|^2+(y_N+\mathfrak D)^2-1\right)^{\frac N 2}}\quad i=1, \ldots, N-1\end{equation}
and
\begin{equation}\label{zn}\begin{aligned}
\mathfrak z^0(y)&:=\frac{\partial U_{z,\Lambda}}{\partial \Lambda }{\Big|_{(z, \Lambda)=(0, 1 )}}=\left(\frac{2-N}{2} U_{\mathbf 0,1}(y)-\nabla U_{\mathbf 0,1}(y)\cdot (y+\mathfrak D \mathfrak e_n)+\mathfrak D\frac{\partial U_{\mathbf 0,1}}{\partial y_N}\right)\\
&=\alpha_N\frac{N-2}{2}\frac{|y|^2+1-\mathfrak D^2}{\left(|\bar y|^2+(y_N+\mathfrak D)^2-1\right)^{\frac N 2}}.\end{aligned}\end{equation}

We define
$$\begin{aligned}H_s&=\left\{u\in D^{1, 2}(\mathbb R^N_+)\,\,: u\,\,\mbox{is even in}\,\, y_h,\,\,\forall\,\, h=2, \ldots, N-1,\right.\\
&\quad\quad\left. u\left(r \cos \alpha, r \sin  \alpha,y_3,\ldots,y_N\right) = u\left(r \cos \left(\alpha + \frac{2\pi j}{k}\right), r \sin \left(\alpha + \frac{2\pi j}{k}\right),y_3,\ldots,y_N\right)\right\}.\end{aligned}$$
We also let \begin{align*}
  \mathbf{x}_j := \left(\bar{\mathbf{x}}_j,0\right)= \left(r \cos \frac{2\left(j-1\right)\pi}{k}, r \sin \frac{2\left(j-1\right)\pi}{k}, 0, \ldots, 0\right) \in \mathbb{R}^{2}\times \mathbb R^{N-2}, \quad j=1, \ldots, k,
\end{align*}
\begin{align*}
  r \in \left[\mu r_0 - \frac{1}{\mu^{\bar{\theta}}},\mu r_0 + \frac{1}{\mu^{\bar{\theta}}}\right],\quad L_0 \leq \Lambda \leq L_1,
\end{align*}
and $\bar{\theta}>0$ is a small number, $L_1>L_0>0$ are some constants. \\ We remark that $\Lambda$ is independent of $k$, while $r$ depends on $\mu$ (and hence on $k$). This is why the solutions we want to build concentrate around circles on a radius that increases as $k$ increases.\\ 
Hence, we define
\begin{align}\label{eq:W}
  W_{r,\Lambda}(y) := \sum_{j=1}^{k}U_{\mathbf{x}_j,\Lambda}.
\end{align}
We will find the solution to \eqref{pbscal} in the form \begin{equation}\label{eq:W1}W_{r,\Lambda} + \phi,\end{equation} with $\phi\in H_s$ satisfying a suitable set of orthogonality conditions and having small norm in some space that will be better specified later.\\ We finally observe that
\begin{align}\label{eq.observation_trans}
  \frac1{|\bar y - \bar{z}|^2+\left|y_n+\mathfrak D\right|^2-1} \leq \frac1{\left|y - \left(\bar{z}, 0\right)\right|^2+\mathfrak{D}^2-1} \leq C\frac{1}{(\left|y - \left(\bar{z}, 0\right)\right| + 1)^2},
\end{align}
which means
\begin{align*}
  U_{\mathbf{x}_i,\Lambda}(y)\leq C\frac{1}{{(\left|y - \left(\bar{\mathbf{x}}_i, 0\right)\right| + 1)}^{N-2}}.
\end{align*}
At the end we also define
$$\Omega_\ell:=\left\{ y=(y',y_3\ldots, y_N)\in \mathbb R^2\times \mathbb R^{N-3}\times \mathbb R_+\,\,: \left\langle \frac{y'}{|y'|}, \frac{ \mathbf{x}_\ell}{|\mathbf{x}_\ell|}\right\rangle\geq \cos\frac\pi k\right\}$$ and 
$$\widetilde\Omega_\ell:=\Omega_\ell\cap\partial\mathbb R^N_+=\left\{ y=(y',y_3\ldots,y_{N-1}, 0)\in \mathbb R^2\times \mathbb R^{N-3}\times \{0\}\,\,: \left\langle \frac{y'}{|y'|}, \frac{ \mathbf{x}_\ell}{|\mathbf{x}_\ell|}\right\rangle\geq \cos\frac\pi k\right\}.$$
 Throughout this paper, we denote $C$ as a generic positive constant independent of $k$. Moreover, when we write $f\lesssim g$ we intend that there exists some positive $C>0$ independent of $k$ so that $f\leq C g$.
\section{Finite-dimensional reduction}\label{riduzione}
In this section we perform a finite-dimensional reduction. To do so we will introduce the following weighted norms
\begin{equation}\label{norm1}
\|\phi\|_{*}=\sup _{y \in \mathbb{R}_{+}^{N}}\left(\sum_{j=1}^{k} \frac{1}{\left(1+\left|y-\mathbf{x}_{j}\right|\right)^{\frac{N-2}{2}+\tau}}\right)^{-1}|\phi(y)|,
\end{equation}
\begin{equation}\label{norm2}
\|h_1\|_{**}=\sup _{y \in \mathbb{R}_{+}^{N}}\left(\sum_{j=1}^{k} \frac{1}{\left(1+\left|y-\mathbf{x}_{j}\right|\right)^{\frac{N+2}{2}+\tau}}\right)^{-1}|h_1(y)|,
\end{equation}
and
\begin{equation}\label{norm3}
  \begin{aligned}
    \|h_2\|_{***}&=\sup _{y \in \partial\mathbb{R}_{+}^{N}}\left(\sum_{j=1}^{k} \frac{1}{\left(1+\left|y-\mathbf{x}_{j}\right|\right)^{\frac{N}{2}+\tau}}\right)^{-1}|h_2(y)|,\\
    &=\sup _{\bar{y} \in \partial\mathbb R^N_+}\left(\sum_{j=1}^{k} \frac{1}{\left(1+\left|\bar{y}-\bar{\mathbf{x}}_{j}\right|\right)^{\frac{N}{2}+\tau}}\right)^{-1}|h_2(\bar{y},0)|,
  \end{aligned}
  \end{equation}
where $\tau>0$ is fixed and small enough. 
\vspace{5pt}
Define \begin{equation}\label{ker}
Z_{i, 1}:=\frac{\partial U_{\mathbf{x}_i,\Lambda}}{\partial r},\qquad Z_{i, 2}:=\frac{\partial U_{\mathbf{x}_i,\Lambda}}{\partial \Lambda},\quad i=1, \ldots, k.\end{equation}
It is easy to show that
\begin{align}\label{eq.Z_i_1_2_expansion_1}
  &Z_{i,1} := \frac{\partial U_{\mathbf{x}_i,\Lambda}}{\partial r} = U_{\mathbf{x}_i,\Lambda}\frac{(N-2)\Lambda^2 \left(y-\mathbf{x}_i\right)}{{\Lambda^2|\bar{y}-\bar{\mathbf{x}}_{i}|}^2+{(\Lambda y_N+ \mathfrak{D})}^2-1} \cdot \frac{\mathbf{x}_i}{r},\\\nonumber\\ \label{eq.Z_i_1_2_expansion_2}
  &Z_{i,2} := \frac{\partial U_{\mathbf{x}_i,\Lambda}}{\partial \Lambda} =U_{\mathbf{x}_i,\Lambda} \frac{N-2}{2\Lambda} \cdot \frac{\mathfrak{D}^2-\Lambda^2 y_N^2-\Lambda^2\left|\bar{y}-\bar{\mathbf{x}}_i\right|^2-1}{{\Lambda^2|\bar{y}-\bar{\mathbf{x}}_{i}|}^2+{(\Lambda y_N+ \mathfrak{D})}^2-1}.
\end{align}
The strategy to solve \eqref{pbscal} is the following: since we look for a solution of the form $W_{r,\Lambda} + \phi$, we first solve a nonlinear projected problem, i.e. we solve the problem
\begin{align}\label{eq.intermediate_nonlinear_form}
  \left\{\begin{array}{l}
 \mathscr L_{in}(\phi)    = \mathscr E_{in}+\mathscr N_{in}(\phi) + \sum_{\ell=1}^{2} \mathfrak c_{\ell}\sum_{i=1}^{k} U_{\mathbf{x}_i,\Lambda}^{\frac{4}{N-2}}Z_{i,\ell}, \quad \text{in}~\mathbb{R}_{+}^{N},\vspace{5pt}\\
    \mathscr L_{bd}(\phi) = \mathscr E_{bd}+\mathscr N_{bd}(\phi) + \sum_{\ell=1}^{2} \mathfrak c_{\ell}\sum_{i=1}^{k} U_{\mathbf{x}_i,\Lambda}^{\frac{2}{N-2}}Z_{i,\ell}, \quad \text{on}~ \partial\mathbb{R}_{+}^{N},\vspace{5pt}\\
    \phi \in H_s,\vspace{5pt}\\
    \langle Z_{i, \ell}, \phi\rangle=0 \quad \text{for}~ i=1,\ldots,k, ~ \ell = 1,2,
  \end{array}\right.
\end{align}
for some numbers $\mathfrak c_{\ell}$, where 
\begin{align*}
 \langle Z_{i, \ell}, \phi\rangle:=  -2\sqrt{N(N-1)}\mathfrak D\int_{\partial\mathbb R^N_+}U_{\mathbf{x}_i,\Lambda}^{\frac{2}{N-2}}Z_{i,\ell}\phi +(N+2)\int_{\mathbb R^N_+} U_{\mathbf{x}_i,\Lambda}^{\frac{4}{N-2}}Z_{i,\ell}\phi \end{align*}
and the linear operators $\mathscr L_{in}, \mathscr L_{bd}$ are defined by
$$ \mathscr L_{in}(\phi):=-c_N\Delta \phi +\frac{N+2}{N-2} K\left(\frac{|y|}{\mu}\right) W_{r,\Lambda}^{\frac{4}{N-2}}\phi,$$
$$\mathscr L_{bd}(\phi):=-\frac{2}{N-2}\partial_{y_N}\phi - \frac{N}{N-2} H\left(\frac{|\bar{y}|}{\mu}\right)  W_{r,\Lambda}^{\frac{2}{N-2}}\phi,$$
while the error terms $\mathscr E_{in},\mathscr E_{bd}$ and 
the nonlinear terms $\mathscr N_{in},\mathscr N_{bd}$ are defined by
\begin{align*}
  \mathscr E_{in}&:=K\left(\frac{|y|}{\mu}\right) W_{r,\Lambda}^{\frac{N+2}{N-2}}+c_N \Delta W_{r,\Lambda},\\
  \mathscr N_{in}(\phi)&:=K\left(\frac{|y|}{\mu}\right)\left[{\left(W_{r,\Lambda} + \phi\right)}^{\frac{N+2}{N-2}} - W_{r,\Lambda}^{\frac{N+2}{N-2}} - \frac{N+2}{N-2} W_{r,\Lambda}^{\frac{4}{N-2}}\phi\right],\\
  \mathscr E_{bd}&:=H\left(\frac{|\bar{y}|}{\mu}\right) W_{r,\Lambda}^{\frac{N}{N-2}}+\frac{2}{N-2} \partial_{y_N} W_{r,\Lambda},\\
  \mathscr N_{bd}(\phi)&:=H\left(\frac{|\bar{y}|}{\mu}\right)\left[{\left(W_{r,\Lambda} + \phi\right)}^{\frac{N}{N-2}} - W_{r,\Lambda}^{\frac{N}{N-2}} - \frac{N}{N-2} W_{r,\Lambda}^{\frac{2}{N-2}}\phi\right].
\end{align*}
After that we will find the parameters $\Lambda$ and $r$ so that $\mathfrak c_\ell=0$ for $\ell=1, 2$.\\
\subsection{Estimate of the error terms}
Here we estimate $\|\mathscr E_{in}\|_{**}$ and $\|\mathscr E_{bd}\|_{***}$ in terms of $\mu$.
\begin{lemma}\label{lemma.error_term_estimates}
  If $N\geq5$, then there exists $\sigma >0$ such that for $k$ sufficiently large it follows that
  \begin{align*}
    \|\mathscr E_{in}\|_{**} \leq C\left(\frac{1}{\mu}\right)^{\frac{m}{2}+\sigma}, \quad  \|\mathscr E_{bd}\|_{***} \leq C\left(\frac{1}{\mu}\right)^{\frac{n}{2}+\sigma}.
  \end{align*}
\end{lemma}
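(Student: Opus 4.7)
The plan is to rewrite each error in a cancellation form exhibiting two physically distinct contributions---one due to the variation of the coefficient ($K$ or $H$) and one due to bubble--bubble interaction---and then estimate each piece separately in the prescribed weighted norm. Since every $U_{\mathbf{x}_j,\Lambda}$ solves the limit problem \eqref{eq:bubble}, one has $c_N\Delta W_{r,\Lambda}=\sum_{j=1}^k U_{\mathbf{x}_j,\Lambda}^{(N+2)/(N-2)}$ pointwise in $\mathbb R^N_+$ and the analogous identity for $\partial_{y_N} W_{r,\Lambda}$ on $\partial\mathbb R^N_+$. Combined with the normalizations $K(r_0)=1$ and $H(r_0)=\mathfrak D/\sqrt{N(N-1)}$, this recasts (modulo an immaterial sign in the definitions)
\begin{align*}
\mathscr E_{in}&=\bigl[K(|y|/\mu)-1\bigr]\,W_{r,\Lambda}^{\frac{N+2}{N-2}}+\Bigl[W_{r,\Lambda}^{\frac{N+2}{N-2}}-\sum_j U_{\mathbf{x}_j,\Lambda}^{\frac{N+2}{N-2}}\Bigr],\\
\mathscr E_{bd}&=\bigl[H(|\bar y|/\mu)-H(r_0)\bigr]\,W_{r,\Lambda}^{\frac{N}{N-2}}+H(r_0)\Bigl[W_{r,\Lambda}^{\frac{N}{N-2}}-\sum_j U_{\mathbf{x}_j,\Lambda}^{\frac{N}{N-2}}\Bigr].
\end{align*}
By the $\mathbb Z/k$-symmetry of $W_{r,\Lambda}$, it suffices to bound each error on $\Omega_\ell$ (respectively $\widetilde\Omega_\ell$) for a single $\ell$, where $U_{\mathbf{x}_\ell,\Lambda}$ dominates and the weight in $\|\cdot\|_{**}$ is bounded below by $(1+|y-\mathbf{x}_\ell|)^{-(N+2)/2-\tau}$ (and analogously for $\|\cdot\|_{***}$).

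For the coefficient piece in $\mathscr E_{in}$, I would split $\Omega_\ell$ into the near set $\{|y-\mathbf{x}_\ell|\le\mu\delta/2\}$ and its complement. Inside the near set, $|y|/\mu$ lies in $(r_0-\delta,r_0+\delta)$ thanks to $|\,|\mathbf{x}_\ell|-\mu r_0|\lesssim\mu^{-\bar\theta}$, so condition $(\mathbf K)$ yields $|K(|y|/\mu)-1|\lesssim(|y-\mathbf{x}_\ell|/\mu)^m+\mu^{-m(1+\bar\theta)}$. Multiplying this by $W_{r,\Lambda}^{(N+2)/(N-2)}\lesssim(1+|y-\mathbf{x}_\ell|)^{-(N+2)}$ and then by the test weight $(1+|y-\mathbf{x}_\ell|)^{(N+2)/2+\tau}$, an elementary maximization in $t=|y-\mathbf{x}_\ell|\in[0,\mu\delta/2]$ (splitting according to the sign of $m-(N+2)/2+\tau$, both cases admissible since $m<N-2$ and $N\ge5$) yields a uniform bound of order $\mu^{-m/2-\sigma}$ with $\sigma>0$ small. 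Outside the near set, $K$ is merely bounded while $W_{r,\Lambda}^{(N+2)/(N-2)}$ decays so strongly that the ratio is of order $\mu^{-(N+2)/2+\tau}$, again beaten by $\mu^{-m/2-\sigma}$ since $m<N-2$. The same argument with $(\mathbf H)$ and the exponents appropriate to $\mathscr E_{bd}$ delivers $\mu^{-n/2-\sigma}$ for the boundary coefficient piece.

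For the interaction pieces, I would invoke the elementary inequality $|(a+b)^p-a^p-b^p|\le C\min(a^{p-1}b,\,ab^{p-1})$, valid for $p>1$, together with the dominance $U_{\mathbf{x}_j,\Lambda}\le U_{\mathbf{x}_\ell,\Lambda}$ on $\Omega_\ell$, to obtain the pointwise bound
\[
\Bigl|W_{r,\Lambda}^p-\sum_j U_{\mathbf{x}_j,\Lambda}^p\Bigr|\;\lesssim\;U_{\mathbf{x}_\ell,\Lambda}^{p-1}\sum_{j\neq\ell}U_{\mathbf{x}_j,\Lambda}+\sum_{j\neq\ell}U_{\mathbf{x}_j,\Lambda}^p
\]
on $\Omega_\ell$, for both $p=(N+2)/(N-2)$ and $p=N/(N-2)$. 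The chord estimate $|\mathbf{x}_j-\mathbf{x}_\ell|\gtrsim\mu|j-\ell|/k$ for $|j-\ell|\le k/2$, together with $U_{\mathbf{x}_j,\Lambda}(y)\lesssim(1+|y-\mathbf{x}_j|)^{-(N-2)}$, reduces the resulting sums to convergent series of the type $\sum 1/|j|^{N-2}$; the algebraic identity $(k/\mu)^{N-2}=\mu^{-\mathfrak m}$ coming from $\mu=k^{(N-2)/(N-2-\mathfrak m)}$ with $\mathfrak m=\min(m,n)$ then shows that the interaction contribution is dominated by the coefficient one in both norms.

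The main technical obstacle will be the interaction estimate: one must track how $\sum_{j\neq\ell}U_{\mathbf{x}_j,\Lambda}(y)$ compares pointwise to the test weight across three qualitatively different regions of $\Omega_\ell$ (close to $\mathbf{x}_\ell$, close to some other $\mathbf{x}_j$, and the intermediate ``midway'' zone where no single bubble dominates), carefully balancing the bubble decay exponent $N-2$ against the slower weight decay $(N+2)/2+\tau$ and the inter-centre spacing $\sim\mu/k$. This is precisely where the hypotheses $N\ge5$ and $m,n\in[2,N-2)$ enter, ensuring that all auxiliary series converge and that the exponents conspire to produce the strict improvement $\sigma>0$.
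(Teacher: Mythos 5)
Your decomposition of each error into a coefficient piece $\bigl[K-1\bigr]W^p$ (resp.\ $\bigl[H-H(r_0)\bigr]W^q$) and an interaction piece $W^p-\sum_j U_{\mathbf{x}_j,\Lambda}^p$ (resp.\ with $q$), and your handling of the near/far split against the weight, reproduce exactly the strategy of the Wei--Yan and Wang--Zhao lemmas the paper cites, so you have recovered the intended argument. Two corrections, one small and one substantive. Small: the elementary inequality should read $|(a+b)^p-a^p-b^p|\le C\bigl(a^{p-1}b+ab^{p-1}\bigr)$ (or $C\max$, not $C\min$); as stated the $\min$ version is false already for $p=3/2$. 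Substantive: the final assertion that ``the interaction contribution is dominated by the coefficient one in both norms'' is not true in general. Evaluating $W^p-\sum_j U_{\mathbf{x}_j,\Lambda}^p$ at $y=\mathbf{x}_\ell$ gives, to leading order, $p\,U_{\mathbf{x}_\ell,\Lambda}^{p-1}(\mathbf{x}_\ell)\sum_{j\ne\ell}U_{\mathbf{x}_j,\Lambda}(\mathbf{x}_\ell)\sim (k/\mu)^{N-2}=\mu^{-\mathfrak m}$, while the weight there is $\sim 1$; hence $\|\mathscr E_{in}\|_{**}\gtrsim\mu^{-\mathfrak m}$. This beats the coefficient piece $\mu^{-m/2-\sigma}$ only if $\mathfrak m\ge m/2+\sigma$, which fails whenever $\mathfrak m=n$ and $m>2n$ --- a configuration that the hypotheses $m,n\in[2,N-2)$ allow as soon as $N\ge 7$ (take, e.g., $n=2$, $m$ close to $N-2$). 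The same remark applies to $\mathscr E_{bd}$ with $m$ and $n$ exchanged. What your estimates actually yield, and what the paper subsequently uses in Proposition~\ref{prop.intermediate_nonlinear_problem}, is the weaker but sufficient bound $\|\mathscr E_{in}\|_{**}+\|\mathscr E_{bd}\|_{***}\lesssim\mu^{-\mathfrak m/2-\sigma}$: the interaction term $\mu^{-\mathfrak m}$ is comfortably below $\mu^{-\mathfrak m/2-\sigma}$ since $\mathfrak m\ge 2$, and each coefficient piece is $\lesssim\mu^{-\mathfrak m/2-\sigma}$ because its exponent is at least $\mathfrak m$. You should either state and prove this uniform $\mathfrak m$-bound, or explicitly record the extra hypothesis ($m\le 2n$ and $n\le 2m$, automatic for $N\le 6$) under which the sharper separate bounds in the Lemma hold.
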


\begin{proof}
The proof is similar to Lemma 2.5 of \cite{WeiYan_2010a} and Lemma 2.1 of \cite{WangZhao_2013}, so we omit the details.
\end{proof}

\subsection{The linear operators}
In order to study the invertibility of the linear part, we will consider the following linear problem
\begin{align}\label{eq.intermediate_linearized_form}
  \left\{\begin{array}{l}
 \mathscr L_{in}(\phi)    = h_{in} + \sum_{\ell=1}^{2} \mathfrak c_{\ell}\sum_{i=1}^{k} U_{\mathbf{x}_i,\Lambda}^{\frac{4}{N-2}}Z_{i,\ell}, \quad \text{in}~\mathbb{R}_{+}^{N},\vspace{5pt}\\
    \mathscr L_{bd}(\phi) = h_{bd} + \sum_{\ell=1}^{2} \mathfrak c_{\ell}\sum_{i=1}^{k} U_{\mathbf{x}_i,\Lambda}^{\frac{2}{N-2}}Z_{i,\ell}, \quad \text{on}~ \partial\mathbb{R}_{+}^{N},\vspace{5pt}\\
    \phi \in H_s,\vspace{5pt}\\
    {\left\langle Z_{i,\ell},\phi \right\rangle} = 0,  \quad \text{for}~ i=1,\ldots,k, ~ \ell = 1,2,
  \end{array}\right.
\end{align}
For any fixed $y = \left(y_1,\ldots, y_N \right) \in \mathbb{R}_{+}^{N} $, we denote by $G(x,y)$ the Green's function of the problem
\begin{align}
  \left\{\begin{array}{ll}  
  -\Delta G(x,y) = \delta_y,&\text{for } x \in \mathbb{R}_{+}^{N},\vspace{5pt}\\
  G(x,y) = 0 , &\text{for } \left|x\right| \rightarrow \infty,\vspace{5pt}\\
  \partial_{x_N}G (x,y)=0,&\text{for } x_N = 0.
  \end{array}\right.
\end{align}
We remark that $G(x,y)$ has the explicit expression:
\begin{align}\label{eq.Green_function_expression}
  G(x,y) = \frac{1}{\omega_N \left(N-2\right)}\left(\frac{1}{{\left|x-y\right|}^{N-2}} + \frac{1}{{\left|x-y^s\right|}^{N-2}}\right),
\end{align}
where $\omega_N$ is the volume of the unit ball in $\mathbb{R}^N$ and $y^s=\left(\bar{y},-y_N\right)$ which means the symmetric point of $y$ with respect to $\partial\mathbb{R}_{+}^{N}$.

\begin{lemma}\label{eq.constraint_0_norm}
  Assume that $\phi_k$ solves \eqref{eq.intermediate_linearized_form} for $h=(h_{in},h_{bd}) =\left((h_{in})_k, (h_{bd})_k\right)=h_k$.\\ If $$\left\Vert h_{in}\right\Vert_{**}\underset{k\to+\infty}\to0,\qquad\left\Vert h_{bd}\right\Vert_{***}\underset{k\to+\infty}\to0,$$
then
$$\left\Vert \phi_k\right\Vert_*\underset{k\to+\infty}\to0.$$
\end{lemma}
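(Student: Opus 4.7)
The plan is to argue by contradiction. Suppose that along a subsequence $\|\phi_k\|_* \geq \delta > 0$ while $\|(h_{in})_k\|_{**} + \|(h_{bd})_k\|_{***} \to 0$; by linearity I normalize $\|\phi_k\|_* = 1$. First, I would control the Lagrange multipliers by testing both equations of \eqref{eq.intermediate_linearized_form} against $Z_{j, m}$, $m \in \{1, 2\}$, integrating by parts and using the orthogonality $\langle Z_{i, \ell}, \phi_k \rangle = 0$. This produces a $2 \times 2$ linear system for $(\mathfrak{c}_{1,k}, \mathfrak{c}_{2,k})$ whose diagonal entries are bounded below away from zero; the cross-bubble interactions are $o(1)$ thanks to $|\mathbf{x}_i - \mathbf{x}_j| \sim \mu/k \to \infty$ together with the decay of $U_{\mathbf{x}_i, \Lambda}$, while the forcing terms $(h_{in})_k, (h_{bd})_k$ tested against $Z_{j, m}$ are $o(1)$ by assumption. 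Inverting gives $\mathfrak{c}_{\ell, k} = o(1)$.

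Next, I would rewrite \eqref{eq.intermediate_linearized_form} in integral form via the Neumann Green's function \eqref{eq.Green_function_expression}, schematically $\phi_k = T_k \phi_k + R_k$, where $T_k$ is the convolution with $G$ of $\frac{N+2}{N-2} K W_{r,\Lambda}^{4/(N-2)} \phi_k$ on the interior plus the analogous boundary piece coming from $H W_{r,\Lambda}^{2/(N-2)} \phi_k$, and $R_k$ collects the small data $h_k$ and the multiplier terms. Combining the definition of the weighted norms with the convolution estimates of Section \ref{appendice}, I obtain a pointwise bound
\begin{equation*}
|\phi_k(y)| \leq \left( \frac{C}{k^\sigma} \|\phi_k\|_* + o(1) \right) \sum_{j=1}^k \frac{1}{(1+|y - \mathbf{x}_j|)^{(N-2)/2+\tau}}
\end{equation*}
for some $\sigma > 0$. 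Consequently $\phi_k$ is $o(1)$ in the $\ast$-norm on the region where $y$ stays bounded away from every $\mathbf{x}_j$, so the normalization $\|\phi_k\|_* = 1$ forces the weighted maximum to be nearly attained at some $y_k$ with $|y_k - \mathbf{x}_{\ell_k}| = O(1)$.

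Finally, by the rotational symmetry encoded in $H_s$ we may take $\ell_k = 1$. Setting $\tilde{\phi}_k(y) := \phi_k(y + \mathbf{x}_1)$ and using the uniform bound $|\tilde{\phi}_k(y)| \leq C(1 + |y|)^{-(N-2)/2 - \tau}$, elliptic estimates up to the boundary give $\tilde{\phi}_k \to \phi_\infty$ in $C^1_{\mathrm{loc}}(\overline{\mathbb{R}^N_+})$, with $\phi_\infty \in D^{1, 2}(\mathbb{R}^N_+)$ solving \eqref{lblin}. The orthogonality conditions pass to the limit (cross-bubble contributions from indices $i \neq 1$ being $o(1)$ by the same decay arguments as in Step 1), so $\phi_\infty$ is $L^2$-orthogonal to each $\mathfrak{z}^i$, $i = 0, 1, \ldots, N-1$. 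The non-degeneracy result recalled after \eqref{lblin} then forces $\phi_\infty \equiv 0$, contradicting $|\tilde{\phi}_k(y_k - \mathbf{x}_1)| \gtrsim 1$.

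The hardest step will be the convolution bound displayed above, specifically extracting a genuinely small prefactor $k^{-\sigma}$ in the contraction. The difficulty is that $W_{r,\Lambda}^{4/(N-2)}$ is not globally small, only far from each $\mathbf{x}_i$: one must split $\mathbb{R}^N_+$ into the sectors $\Omega_\ell$ and $\widetilde{\Omega}_\ell$, treat the contribution from the bubble centered at the sector separately from the long-range interactions with the other bubbles, and balance the weight exponent $\tau$ against the inter-bubble spacing $\mu/k$. The hypothesis $N \geq 5$ enters precisely to ensure the geometric sum $\sum_j (1+|y-\mathbf{x}_j|)^{-(N-2)/2-\tau}$ is bounded uniformly in $y$. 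This is the standard technical core of the finite-dimensional reduction, here adapted to the mixed elliptic/Robin setting as in \cite{WeiYan_2010a, WangZhao_2013}.
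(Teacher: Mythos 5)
Your overall strategy is the same as the paper's: normalize $\|\phi_k\|_* = 1$ by contradiction, test against $Z_{1,\ell}$ to get $\mathfrak c_\ell = o(1)$, rewrite via the Neumann Green's function to obtain a weighted pointwise bound, deduce that the weighted supremum must essentially be attained near some $\mathbf{x}_i$, blow up at that center, and kill the limit with the non-degeneracy of $U_{\mathbf 0,1}$. However, the intermediate pointwise bound you display is not correct as stated, and the discrepancy matters. You claim
\begin{equation*}
|\phi_k(y)| \leq \left( \frac{C}{k^\sigma} \|\phi_k\|_* + o(1) \right) \sum_{j=1}^k \frac{1}{(1+|y - \mathbf{x}_j|)^{(N-2)/2+\tau}}
\end{equation*}
with a uniformly small prefactor $k^{-\sigma}$. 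Such a bound cannot be extracted: near each center $\mathbf{x}_i$ the potential $W_{r,\Lambda}^{4/(N-2)}$ is of order one, and the convolution with the Green's function gains \emph{decay}, not a small constant. If your bound did hold, it would already imply $\|\phi_k\|_* = o(1)$ directly, and the localization-plus-blow-up argument that you (and the paper) carry out afterwards would be superfluous. What Lemma~\ref{lemma.appendix_estimates_3} actually delivers, and what the paper uses, is an improvement in the decay \emph{exponent}:
\begin{equation*}
|\phi_k(y)| \leq C\,\|\phi_k\|_*\sum_{j=1}^k \frac{1}{(1+|y-\mathbf{x}_j|)^{(N-2)/2+\tau+\theta}} + C\left(\|h_{in}\|_{**}+\|h_{bd}\|_{***}\right)\sum_{j=1}^k \frac{1}{(1+|y-\mathbf{x}_j|)^{(N-2)/2+\tau}}
\end{equation*}
for some fixed $\theta>0$. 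Dividing by the weight, this shows the $*$-quotient is small only where $\min_j|y-\mathbf{x}_j|$ is large; together with $\|\phi_k\|_*=1$ it forces a uniform lower bound for $|\phi_k|$ in a fixed ball around some $\mathbf{x}_i$. Notice that your next sentence (``smallness away from every $\mathbf{x}_j$, hence the max is attained at $O(1)$ distance from a center'') is in fact reasoning as if you had this weaker, exponent-improving version — so your plan is internally inconsistent: the bound you write down is too strong, while the conclusion you draw from it matches the correct, weaker one. With the corrected bound in place, the remainder of your argument (rotate to $\ell_k=1$, local $C^1$ convergence, passage to the limit in the orthogonality conditions, non-degeneracy from \cite[Theorem~2.1]{Cruz-BlazquezPistoiaVaira_2022}) is exactly the paper's and is fine. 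Accordingly, your closing paragraph about how to manufacture a genuine $k^{-\sigma}$ contraction factor is aiming at the wrong target; the required estimate is weaker and is already supplied by the appendix lemma.
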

\begin{proof}
We argue by contradiction.

  Assume that there are $k \rightarrow+\infty$, $h=h_{k}$, $\Lambda_{k} \in\left[L_{1}, L_{2}\right]$, $r_{k} \in\left[r_{0} \mu-\frac{1}{\mu^{\bar{\theta}}}, r_{0} \mu+\frac{1}{\mu^{\bar{\theta}}}\right]$, and $\phi_{k} \in D^{1,2}\left(\mathbb{R}_{+}^{n}\right)$ solving \eqref{eq.intermediate_linearized_form} for $h=h_{k}, \Lambda=\Lambda_{k}, r=r_{k}$, with $\left\|(h_{in})_{k}\right\|_{* *} \rightarrow 0$, $\left\|(h_{bd})_{k}\right\|_{***} \rightarrow 0$, and $\left\|\phi_{k}\right\|_{*} \geq c'$ for some constant $c'>0$. We may assume, without loss of generality, that $\left\|\phi_{k}\right\|_{*}=1$. For simplicity, we drop the subscript $k$.

First, we estimate $\mathfrak c_{\ell}$, $\ell =1,2$. Multiplying \eqref{eq.intermediate_linearized_form} by $Z_{1, \ell}$, integrating and using the equation satisfied by $Z_{1,\ell}$ we find that

\begin{equation}\label{eq.matrix_cj}
  \begin{aligned}
 & \sum_{j=1}^2\mathfrak c_j\sum_{i=1}^k\int_{\mathbb R^{N}_+}U_{\mathbf{x}_i,\Lambda}^{\frac{4}{N-2}}Z_{i,\ell}Z_{1, \ell}+C_N\frac{N-2}{2}\sum_{j=1}^2\mathfrak c_j\sum_{i=1}^k\int_{\partial\mathbb R^N_+}U_{\mathbf{x}_i,\Lambda}^{\frac{2}{N-2}}Z_{i,\ell}Z_{1, \ell}\\
 =&-C_N\frac{N-2}{2}\int_{\partial\mathbb R^N_+}h_{bd}Z_{1, \ell}-\int_{\mathbb R^{N}_+}h_{in}Z_{1, \ell}+\frac{N+2}{N-2}\int_{\mathbb R^N_+}\left(K\left(\frac{|y|}{\mu}\right)-1\right)U_{\mathbf{x}_1,\Lambda}^{\frac{4}{N-2}}Z_{1, \ell}\phi\\&+\frac{N+2}{N-2}\int_{\mathbb R^N_+}K\left(\frac{|y|}{\mu}\right)\left(W_{r, \Lambda}^{\frac{4}{N-2}}-U_{\mathbf{x}_1,\Lambda}^{\frac{4}{N-2}}\right)Z_{1, \ell}\phi-\frac{c_N N}{2}\int_{\partial\mathbb R^N_+}H\left(\frac{|\bar y|}{\mu}\right)\left(W_{r, \Lambda}^{\frac{2}{N-2}}-U_{\mathbf{x}_1,\Lambda}^{\frac{2}{N-2}}\right)Z_{1, \ell}\phi\\&-\frac{c_N N}{2}\int_{\partial\mathbb R^N_+}\left(H\left(\frac{|\bar y|}{\mu}\right)-\frac{\mathfrak D}{\sqrt{N(N-1)}}\right)U_{\mathbf{x}_1,\Lambda}^{\frac{2}{N-2}}Z_{1, \ell}\phi.
  \end{aligned}
\end{equation}
In a standard way it follows that
$$\sum_{j=1}^2\mathfrak c_j\sum_{i=1}^k\int_{\mathbb R^{N}_+}U_{\mathbf{x}_i,\Lambda}^{\frac{4}{N-2}}Z_{i,\ell}Z_{1, \ell}+C_N\frac{N-2}{2}\sum_{j=1}^2\mathfrak c_j\sum_{i=1}^k\int_{\partial\mathbb R^N_+}U_{\mathbf{x}_i,\Lambda}^{\frac{2}{N-2}}Z_{i,\ell}Z_{1, \ell}=\mathfrak c_\ell(A+o(1)),$$
for some $A>0$.\\
Moreover, it follows from Lemma \ref{lemma.appendix_estimates_1} and by \eqref{eq.lemma.appendix_estimates_3_temp}, \eqref{eq.Z_i_1_2_expansion_1} and \eqref{eq.Z_i_1_2_expansion_2} that,
for any \\ $1<\alpha\leq \min\left\{N-2, \frac{N+2}{2}+\tau\right\}$,
\begin{align*}
  \begin{aligned}
  \int_{\mathbb R^N_+} h_{in}Z_{1, \ell}& \lesssim \|h_{in}\|_{**} \int_{\mathbb{R}_{+}^{N}} \frac{1}{\left(1+\left|y-\mathbf{x}_{1}\right|\right)^{N-2}} \sum_{i=1}^{k} \frac{1}{\left(1+\left|y-\mathbf{x}_{i}\right|\right)^{\frac{N+2}{2}+\tau}} d y \\
  & \lesssim\|h_{in}\|_{**}\left(1+\sum_{j=2}^k\frac{1}{|\mathbf x_j-\mathbf x_1|^\alpha}\int_{\mathbb R^N_+}\frac{1}{\left(1+\left|y-\mathbf{x}_{j}\right|\right)^{\frac{3}{2}N-1+\tau-\alpha}}\right)\\&\lesssim\|h_{in}\|_{**} .    \end{aligned}
\end{align*}

Similarly, we have
$$\int_{\partial\mathbb R^N_+}h_{bd}Z_{1, \ell}\lesssim \|h_{bd}\|_{***}. $$

Now, by arguing as in \cite{WeiYan_2010a}
it follows that
$$\int_{\mathbb R^N_+}\left(K\left(\frac{|y|}{\mu}\right)-1\right)U_{\mathbf{x}_1,\Lambda}^{\frac{4}{N-2}}Z_{1, \ell}\phi\lesssim o(1)\|\phi\|_*, \qquad \int_{\mathbb R^N_+}K\left(\frac{|y|}{\mu}\right)\left(W_{r, \Lambda}^{\frac{4}{N-2}}-U_{\mathbf{x}_1,\Lambda}^{\frac{4}{N-2}}\right)Z_{1, \ell}\phi\lesssim o(1)\|\phi\|_*$$

and by arguing as in \cite{WangZhao_2013} we get that
$$\int_{\partial\mathbb R^N_+}H\left(\frac{|\bar y|}{\mu}\right)\left(W_{r, \Lambda}^{\frac{2}{N-2}}-U_{\mathbf{x}_1,\Lambda}^{\frac{2}{N-2}}\right)Z_{1, \ell}\phi\lesssim o(1)\|\phi\|_*$$ and $$ \int_{\partial\mathbb R^N_+}\left(H\left(\frac{|\bar y|}{\mu}\right)-\frac{\mathfrak D}{\sqrt{N(N-1)}}\right)U_{\mathbf{x}_1,\Lambda}^{\frac{2}{N-2}}Z_{1, \ell}\phi\lesssim o(1)\|\phi\|_*.$$

We obtain 
\begin{equation}\label{eq.c_ell_estimate}
|c_{\ell}|\lesssim \left(\|h_{in}\|_{**}+\|h_{bd}\|_{***}\right)+o(1)\|\phi\|_{*} = o(1).
\end{equation}

By Green's representation formula, we can rewrite \eqref{eq.intermediate_linearized_form} as
  \begin{align*}
    \begin{aligned}
      \phi(y) ={}& \frac{1}{c_N}\int_{\mathbb{R}_{+}^{N}} G(y,z) \left[ -\frac{N+2}{N-2} K\left(\frac{|z|}{\mu}\right) W_{r,\Lambda}^{\frac{4}{N-2}}(z)\phi(z) + h_{in}(z) + \sum_{\ell=1}^{2} \mathfrak c_{\ell}\sum_{i=1}^{k} U_{\mathbf{x}_i,\Lambda}^{\frac{4}{N-2}}(z) Z_{i,\ell}(z) \right] dz \\
      &+ \frac{N-2}{2}\int_{\partial\mathbb{R}_{+}^{N}} G(y,z)\left[ \frac{N}{N-2} H\left(\frac{|\bar{z}|}{\mu}\right)  W_{r,\Lambda}^{\frac{2}{N-2}}(z)\phi(z) + h_{bd}(z) +\sum_{\ell=1}^{2} \mathfrak c_{\ell}\sum_{i=1}^{k} U_{\mathbf{x}_i,\Lambda}^{\frac{2}{N-2}}(z)Z_{i,\ell}(z) \right] d\bar{z},
    \end{aligned}
  \end{align*}
where $G$ is given as in \eqref{eq.Green_function_expression}.
Since $|G(y,z)| \leq \frac{C}{|z-y|^{N-2}},$ we have
\begin{align*}
  \begin{aligned}
    |\phi(y)| &\leq  C\|\phi\|_{*} \int_{\mathbb{R}_{+}^{N}} \frac{1}{|z-y|^{N-2}} W_{r, \Lambda}^{\frac{4}{N-2}} \sum_{j=1}^{k} \frac{1}{\left(1+\left|z-\mathbf{x}_{j}\right|\right)^{\frac{N-2}{2}+\tau}} d z \\
    & \hspace{1em} +C\|h_{in}\|_{* *} \int_{\mathbb{R}_{+}^{N}} \frac{1}{|z-y|^{N-2}} \sum_{j=1}^{k} \frac{1}{\left(1+\left|z-\mathbf{x}_{j}\right|\right)^{\frac{N+2}{2}+\tau}} d z\\
    & \hspace{1em} + C\sum_{\ell=1}^{2} |\mathfrak c_{\ell}| \sum_{i=1}^{k} \int_{\mathbb{R}_{+}^{N}} \frac{1}{|z-y|^{N-2}} \frac{1}{\left(1+\left|z-\mathbf{x}_{i}\right|\right)^{N+2}} d z \\
    & \hspace{1em} + C\|\phi\|_{*} \int_{\partial\mathbb{R}_{+}^{N}} \frac{1}{|z-y|^{N-2}} W_{r, \Lambda}^{\frac{2}{N-2}} \sum_{j=1}^{k} \frac{1}{\left(1+\left|z-\mathbf{x}_{j}\right|\right)^{\frac{N-2}{2}+\tau}} d z \\
    & \hspace{1em} +C\|h_{bd}\|_{***} \int_{\partial\mathbb{R}_{+}^{N}} \frac{1}{|z-y|^{N-2}} \sum_{j=1}^{k} \frac{1}{\left(1+\left|z-\mathbf{x}_{j}\right|\right)^{\frac{N}{2}+\tau}} d z\\
    & \hspace{1em} + C\sum_{\ell=1}^{2} |\mathfrak c_{\ell} |\sum_{i=1}^{k} \int_{\partial\mathbb{R}_{+}^{N}} \frac{1}{|z-y|^{N-2}} \frac{1}{\left(1+\left|z-\mathbf{x}_{i}\right|\right)^{N}} d z\\
  \end{aligned}
\end{align*}
Now, by applying Lemma \ref{lemma.appendix_estimates_3} we have that
\begin{align*}
  \begin{aligned}
    \int_{\mathbb{R}_{+}^{N}} \frac{1}{|z-y|^{N-2}} W_{r, \Lambda}^{\frac{4}{N-2}} \sum_{j=1}^{k} \frac{1}{\left(1+\left|z-\mathbf{x}_{j}\right|\right)^{\frac{N-2}{2}+\tau}} d z & \leq C \sum_{j=1}^{k} \frac{1}{\left(1+\left|y-\mathbf{x}_{j}\right|\right)^{\frac{N-2}{2}+\tau+\theta}}.
   \end{aligned}
\end{align*}
Moreover, by using Lemma \ref{lemma.appendix_estimates_2} with $\sigma=\frac{N-2}{2}+\tau\in (0, N-2)$ by choosing $\tau<\frac{N-2}{2}$. Then  we get 
\begin{align*}
  \begin{aligned}
     \int_{\mathbb{R}_{+}^{N}} \frac{1}{|z-y|^{N-2}} \sum_{j=1}^{k} \frac{1}{\left(1+\left|z-\mathbf{x}_{j}\right|\right)^{\frac{N+2}{2}+\tau}} d z    & \leq \sum_{j=1}^{k} \frac{1}{\left(1+\left|y-\mathbf{x}_{j}\right|\right)^{\frac{N-2}{2}+\tau}},
  \end{aligned}
\end{align*}
and again, by using Lemma \ref{lemma.appendix_estimates_2} with $\sigma=\frac{N-2}{2}+\tau\in (0, N-2)$ by choosing $\tau<\frac{N-2}{2}$. Then  we get

\begin{align*}
  \begin{aligned}
    \sum_{i=1}^{k} \int_{\mathbb{R}_{+}^{N}} \frac{1}{|z-y|^{N-2}} \frac{1}{\left(1+\left|z-\mathbf{x}_{i}\right|\right)^{N+2}} d z &\leq   \sum_{i=1}^{k} \int_{\mathbb{R}_{+}^{N}} \frac{1}{|z-y|^{N-2}} \frac{1}{\left(1+\left|z-\mathbf{x}_{i}\right|\right)^{\frac{N+2}{2}+\tau}} d z\\
    &\leq C \sum_{j=1}^{k} \frac{1}{\left(1+\left|y-\mathbf{x}_{j}\right|\right)^{\frac{N-2}{2}+\tau}}.
  \end{aligned}
\end{align*}
The boundary terms can be treated in a similar way. Hence
\begin{align*}
  \begin{aligned}
    |\phi(y)| 
    & \leq C \|\phi\|_{*}\sum_{j=1}^{k} \frac{1}{\left(1+\left|y-\mathbf{x}_{j}\right|\right)^{\frac{N-2}{2}+\tau+\theta}}+ C (\|h_{in}\|_{**}+\|h_{bd}\|_{***}) \sum_{j=1}^{k} \frac{1}{\left(1+\left|y-\mathbf{x}_{j}\right|\right)^{\frac{N-2}{2}+\tau}},
  \end{aligned}
\end{align*}
for some positive constant $\theta>0$.
Thus, from the definitions \eqref{norm1}, \eqref{norm2}, \eqref{norm3},
\begin{equation}\label{212}
  \|\phi\|_{*} \lesssim  (\|h_{in}\|_{**}+\|h_{bd}\|_{***}) + \displaystyle\frac{\displaystyle\sum_{j=1}^{k}\frac{1}{\left(1+\left|y-\mathbf{x}_{j}\right|\right)^{\frac{N-2}{2}+\tau+\theta}}}{ \displaystyle\sum_{j=1}^{k} \frac{1}{\left(1+\left|y-\mathbf{x}_{j}\right|\right)^{\frac{N-2}{2}+\tau}}}.
\end{equation}
Since $\|\phi\|_{*}=1$, from \eqref{212} it follows that there exists $R>0$ such that
\begin{align}\label{infty}
  \|\phi\|_{L^{\infty}\left(B_{R}(\mathbf{x}_i)\right)} \geq a>0,
\end{align}
for some $i$.

But, since $\|\phi\|_*=1$, by elliptic regularity, we have that $\|\phi\|_{C^1(\mathbb R^N_+)}\leq 1$ and hence, by applying the Ascoli-Arzela's Theorem, we get the existence of a subsequence such that $\phi(y-\mathbf{x}_i)$ converges uniformly in compact sets to some $\hat\phi$. As usual, $\hat\phi$ satisfies the limit problem 
\begin{align}
  \left\{\begin{array}{l}
    -c_N\Delta \hat{\phi} +\frac{N+2}{N-2}  U_{\mathbf{0},\Lambda}^{\frac{4}{N-2}} \hat{\phi} = 0, \quad \text{in}~\mathbb{R}_{+}^{N},\vspace{5pt}\\
    -\frac{2}{N-2}\partial_{y_N}\hat{\phi} - \frac{N}{N-2} U_{\mathbf{0},\Lambda}^{\frac{2}{N-2}} \hat{\phi} = 0, \quad \text{on}~ \partial\mathbb{R}_{+}^{N},
  \end{array}\right.
\end{align}
for some $\Lambda \in\left[L_{1}, L_{2}\right]$. Moreover it follows also that (passing to the limit into the orthogonality condition) $\langle \hat \phi, \mathfrak z^i\rangle=0$, so the non-degeneracy of $ U_{\mathbf{0},\Lambda}$ \cite[Theorem 2.1]{Cruz-BlazquezPistoiaVaira_2022} implies that $\hat{\phi}=0$ and it is a contradiction with \eqref{infty}.
\end{proof}
From Lemma \ref{eq.constraint_0_norm}, using the same argument as in the proof of \cite[Proposition 4.1]{delPinoFelmerMusso_2004}, we can prove the following result.

\begin{proposition}\label{prop.linearized_problem_temp_result}
  There exists $k_{0}\in\mathbb N$ and a constant $C>0$, independent of $k$, such that for all $k \geq k_{0}$ and all $\left(h_{in},h_{bd}\right) \in L^{\infty}\left(\mathbb{R}_{+}^{N}\right) \times L^{\infty}\left(\partial\mathbb R^N_+\right)$, problem \eqref{eq.intermediate_linearized_form} has a unique solution $\phi$ denoted as $\phi := L_{k}\left(h_{in},h_{bd}\right)$. Besides,

\begin{equation}
\left\| L_{k}\left(h_{in},h_{bd}\right)\right\|_{*} \leq C\|h_{in}\|_{* *} + C \|h_{bd}\|_{***}, \quad\left|\mathfrak c_{\ell}\right| \leq C\|h_{in}\|_{* *} +C \|h_{bd}\|_{***}, ~\text{for}~ \ell =1,2.
\end{equation}
\end{proposition}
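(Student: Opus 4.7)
The plan is to combine the a priori $*$-bound furnished by Lemma \ref{eq.constraint_0_norm} with a standard Fredholm alternative in the spirit of del Pino--Felmer--Musso; the hard analytic work has already been carried out in Lemma \ref{eq.constraint_0_norm}, so this proposition is essentially a packaging step.

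First I would set up the functional framework. Let $\mathbb H_k$ be the closed subspace of $H_s$ defined by the $2k$ orthogonality constraints $\langle Z_{i,\ell},\phi\rangle=0$ for $i=1,\dots,k$, $\ell=1,2$. Using the Riesz representation theorem on $D^{1,2}(\mathbb R^N_+)\cap\mathbb H_k$ (with the Dirichlet inner product), together with Green's representation formula, the projected problem \eqref{eq.intermediate_linearized_form} can be rewritten as an operator equation
\begin{equation*}
\phi = T_k(\phi) + \tilde h_k(h_{in},h_{bd}), \qquad \phi\in\mathbb H_k,
\end{equation*}
where the ``free'' term $\tilde h_k$ depends linearly and boundedly on $(h_{in},h_{bd})$, and where $T_k$ encodes the zeroth-order terms $\tfrac{N+2}{N-2}K(\cdot/\mu)W_{r,\Lambda}^{4/(N-2)}\phi$ (interior) and $\tfrac{N}{N-2}H(\cdot/\mu)W_{r,\Lambda}^{2/(N-2)}\phi$ (boundary). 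Because these potentials decay like $W_{r,\Lambda}^{4/(N-2)}$ and $W_{r,\Lambda}^{2/(N-2)}$ at infinity and because $W_{r,\Lambda}\in L^{2N/(N-2)}$ locally with sufficient decay, standard Rellich--Kondrachov arguments adapted to $\mathbb R^N_+$ (truncate, use compactness on bounded sets, and control the tail by the decay of $W_{r,\Lambda}$) show that $T_k$ is a compact operator on $\mathbb H_k$.

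Next comes the key step, injectivity of $I-T_k$ for $k$ large. Suppose, for some sequence $k\to+\infty$, there exists $\phi_k\in\mathbb H_k$ with $\phi_k=T_k(\phi_k)$ and $\|\phi_k\|_*=1$. Then $\phi_k$ solves \eqref{eq.intermediate_linearized_form} with zero right-hand side in the interior and on the boundary (after absorbing the Lagrange multipliers, which one first checks must vanish by testing against $Z_{1,\ell}$ and using the estimate already proven in \eqref{eq.c_ell_estimate} with $h=0$). This directly contradicts Lemma \ref{eq.constraint_0_norm}. Hence $I-T_k$ is injective on $\mathbb H_k$, and the Fredholm alternative yields a unique solution $\phi=L_k(h_{in},h_{bd})$ of \eqref{eq.intermediate_linearized_form} for every right-hand side.

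Finally, the quantitative bounds. The estimate $\|L_k(h_{in},h_{bd})\|_*\le C(\|h_{in}\|_{**}+\|h_{bd}\|_{***})$ is obtained by a second contradiction argument: were it to fail along a subsequence, normalizing $\|\phi_k\|_*=1$ and dividing the right-hand sides by $\|\phi_k\|_*$ would place us precisely in the setting of Lemma \ref{eq.constraint_0_norm}, a contradiction. Once $\|\phi\|_*$ is controlled, testing \eqref{eq.intermediate_linearized_form} against $Z_{1,\ell}$ as in \eqref{eq.matrix_cj} produces a $2\times 2$ almost-diagonal linear system for $(\mathfrak c_1,\mathfrak c_2)$ with leading matrix $\mathrm{diag}(A,A)+o(1)$; inverting it and using the $o(1)\|\phi\|_*$ estimates for the cross terms together with the Green-type bounds on $\int h_{in}Z_{1,\ell}$ and $\int h_{bd}Z_{1,\ell}$ already established in the proof of Lemma \ref{eq.constraint_0_norm} yields $|\mathfrak c_\ell|\le C(\|h_{in}\|_{**}+\|h_{bd}\|_{***})$, concluding the proof.

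The main obstacle I anticipate is being careful that the compactness of $T_k$ and the Fredholm argument work uniformly in $k$; all uniform constants, however, are supplied by Lemma \ref{eq.constraint_0_norm}, so the argument is robust once the a priori bound is in hand.
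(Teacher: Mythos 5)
Your proposal reconstructs the standard del Pino--Felmer--Musso Fredholm-alternative argument, which is precisely what the paper invokes by reference (Proposition 4.1 of \cite{delPinoFelmerMusso_2004}) rather than spelling out; the structure (compact operator reduction, injectivity via the a priori bound of Lemma \ref{eq.constraint_0_norm}, and a final contradiction argument for the quantitative estimate) matches the intended proof. One small inaccuracy: with $h=0$ the estimate \eqref{eq.c_ell_estimate} only gives $|\mathfrak c_\ell| = o(1)\|\phi\|_*$, not $\mathfrak c_\ell = 0$, but this is immaterial since Lemma \ref{eq.constraint_0_norm} is stated for the full problem \eqref{eq.intermediate_linearized_form} including the $\mathfrak c_\ell$ terms and therefore applies directly to the homogeneous equation.
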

\subsection{The fixed point argument}
As usual, by using a standard contraction argument we can show the existence of $\phi$ solution of \eqref{eq.intermediate_nonlinear_form} (for the proof, see Proposition 2.3 in \cite{WeiYan_2010a}).
\begin{proposition}\label{prop.intermediate_nonlinear_problem}
  There exists $k_0 > 0$, such that for each $k \geq k_0$, $L_0 \leq \Lambda \leq L_1$, $\left|r-\mu r_0\right| \leq \frac{1}{\mu^{\bar{\theta}}}$
 where $\bar{\theta}$ is a fixed small constant, \eqref{eq.intermediate_nonlinear_form} has a unique solution $\phi = \phi (r,\Lambda)$, satisfying
\begin{align*}
  \|\phi\|_{*} \leq C\left(\frac{1}{\mu}\right)^{\frac{\mathfrak m}{2}+\sigma}, \quad\left|\mathfrak c_{\ell}\right| \leq C\left(\frac{1}{\mu}\right)^{\frac{\mathfrak m}{2}+\sigma}, \quad \ell=1,2,
 \end{align*}

 where $\sigma>0$ is a small constant.
\end{proposition}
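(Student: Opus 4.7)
The plan is to recast \eqref{eq.intermediate_nonlinear_form} as a fixed point equation. Using the inverse $L_k$ of the linearized problem furnished by Proposition~\ref{prop.linearized_problem_temp_result}, solving \eqref{eq.intermediate_nonlinear_form} is equivalent to finding a fixed point of
$$T(\phi):=L_k\bigl(\mathscr E_{in}+\mathscr N_{in}(\phi),\ \mathscr E_{bd}+\mathscr N_{bd}(\phi)\bigr),$$
acting on the orthogonality-constrained subspace of $H_s$. The multipliers $\mathfrak c_\ell$ are produced automatically by $L_k$, and the desired bound on $|\mathfrak c_\ell|$ comes directly from the second estimate of Proposition~\ref{prop.linearized_problem_temp_result} once the error and the nonlinear terms are controlled in the appropriate norms.

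The error part is already at the target order $\mu^{-\mathfrak m/2-\sigma}$ by Lemma~\ref{lemma.error_term_estimates}. For the nonlinear parts I would prove
$$\|\mathscr N_{in}(\phi)\|_{**}\lesssim\|\phi\|_*^{\min\{p_1,2\}},\qquad \|\mathscr N_{bd}(\phi)\|_{***}\lesssim\|\phi\|_*^{p_2},$$
with $p_1=\frac{N+2}{N-2}$ and $p_2=\frac{N}{N-2}$, plus matching Lipschitz estimates for $\mathscr N_\bullet(\phi_1)-\mathscr N_\bullet(\phi_2)$. These would follow from the pointwise inequality $\bigl|(a+b)^p-a^p-p\,a^{p-1}b\bigr|\lesssim a^{(p-2)_+}|b|^{\min\{p,2\}}+|b|^p$ applied with $a=W_{r,\Lambda}$ and $b=\phi$, from $W_{r,\Lambda}\lesssim\sum_j U_{\mathbf{x}_j,\Lambda}$, from the pointwise control $|\phi(y)|\le\|\phi\|_*\sum_j(1+|y-\mathbf{x}_j|)^{-(N-2)/2-\tau}$, and from the weighted-sum manipulations of Lemma~\ref{lemma.appendix_estimates_1}, exactly in the spirit of \cite{WeiYan_2010a,WangZhao_2013}. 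Since $p_1-2=\frac{6-N}{N-2}\le\tfrac{1}{3}$ for $N\ge 5$, one either has $p_1\le 2$ (so the awkward quadratic term disappears) or may apply the subadditivity $\bigl(\sum_j U_{\mathbf{x}_j,\Lambda}\bigr)^{p_1-2}\le\sum_j U_{\mathbf{x}_j,\Lambda}^{p_1-2}$; the boundary exponent $p_2$ is always in $(1,2]$ for $N\ge 5$, so no such subtlety arises there.

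With these estimates in hand, I would restrict $T$ to the ball
$$\mathcal B_k:=\bigl\{\phi\in H_s:\ \|\phi\|_*\le C_\star\mu^{-\mathfrak m/2-\sigma},\ \langle Z_{i,\ell},\phi\rangle=0\ \text{for all }i,\ell\bigr\},$$
verify $T(\mathcal B_k)\subset\mathcal B_k$ via
$$\|T(\phi)\|_*\le C\bigl(\mu^{-\mathfrak m/2-\sigma}+\|\phi\|_*^{1+\eta_0}\bigr),\qquad \eta_0:=\min\{p_1,p_2,2\}-1>0,$$
which is bounded by $C_\star\mu^{-\mathfrak m/2-\sigma}$ provided $C_\star$ is taken large and $k\ge k_0$ is large enough, and deduce the contraction property $\|T(\phi_1)-T(\phi_2)\|_*\le o(1)\|\phi_1-\phi_2\|_*$ from the analogous Lipschitz bound. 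Banach's fixed point theorem then delivers the unique $\phi=\phi(r,\Lambda)\in\mathcal B_k$, with the $|\mathfrak c_\ell|$ estimate immediate from Proposition~\ref{prop.linearized_problem_temp_result}. The main obstacle is not conceptual but combinatorial: the nonlinear weighted estimates require careful uniform-in-$k$ control of cross-bubble integrals of the form $\int(1+|y-\mathbf{x}_i|)^{-a}(1+|y-\mathbf{x}_j|)^{-b}$, together with their boundary analogues on the $\widetilde\Omega_\ell$-pieces, which is exactly the bookkeeping carried out by the appendix lemmas and inherited from \cite{WeiYan_2010a,WangZhao_2013}.
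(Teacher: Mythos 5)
Your proposal is correct and is essentially the same approach the paper takes: the paper proves Proposition~\ref{prop.intermediate_nonlinear_problem} by invoking the standard contraction argument of \cite[Proposition~2.3]{WeiYan_2010a}, which is precisely the Banach fixed-point scheme for $T(\phi)=L_k\bigl(\mathscr E_{in}+\mathscr N_{in}(\phi),\ \mathscr E_{bd}+\mathscr N_{bd}(\phi)\bigr)$ on the ball $\mathcal B_k$ that you describe, with the error bound from Lemma~\ref{lemma.error_term_estimates}, the inverse from Proposition~\ref{prop.linearized_problem_temp_result}, and the superlinear nonlinear estimates handled via the appendix lemmas.
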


\section{The reduced problem: proof of Theorem \ref{principale}}\label{ridotto}
In this section we need to find $\Lambda$ and $r$ so that $\mathfrak c_\ell$ is zero for $\ell=1, 2$. \\
The problem \eqref{pb} is variational, that is its solutions are the critical points of the $C^2-$ functional $$J(u):=\frac{c_N}{2}\int_{\mathbb R^N_+}|\nabla u|^2+\frac{1}{2^*}\int_{\mathbb R^N_+} K\left(\frac{|y|}{\mu}\right)(u)^{2^*}_+-(N-2)\int_{\partial \mathbb R^N_+}H\left(\frac{|\bar y|}{\mu}\right)(u)^{2^\sharp}_+$$ where $2^*:=\frac{2N}{N-2}$ and $2^\sharp:=\frac{2(N-1)}{N-2}$.\\
We also define the reduced functional
$$F(r, \Lambda):=J(W_{r, \Lambda}+\phi)$$ where $\phi$ is the function found in Proposition \ref{prop.intermediate_nonlinear_problem}.\\\\
First we need a condition for which $\mathfrak c_\ell=0$.
\begin{proposition}
If $(r, \Lambda)$ is a critical point of the reduced functional $F(r, \Lambda)$ then $\mathfrak c_1=\mathfrak c_2=0$.
\end{proposition}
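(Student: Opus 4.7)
The strategy is the standard one for Lyapunov--Schmidt reductions: express both criticality conditions $\partial_r F=\partial_\Lambda F=0$ as a homogeneous $2\times 2$ linear system in the Lagrange multipliers $(\mathfrak c_1,\mathfrak c_2)$, and show that its coefficient matrix is invertible. By the chain rule,
\[
\partial_r F=\langle J'(W_{r,\Lambda}+\phi),\partial_r W_{r,\Lambda}+\partial_r\phi\rangle,\qquad
\partial_\Lambda F=\langle J'(W_{r,\Lambda}+\phi),\partial_\Lambda W_{r,\Lambda}+\partial_\Lambda\phi\rangle.
\]
Since $W_{r,\Lambda}+\phi$ solves the projected problem \eqref{eq.intermediate_nonlinear_form}, its weak formulation yields, for any admissible test $v$,
\[
\langle J'(W_{r,\Lambda}+\phi),v\rangle=\sum_{\ell=1}^{2}\mathfrak c_\ell\sum_{i=1}^{k}\Bigl(\int_{\mathbb R^N_+}U_{\mathbf{x}_i,\Lambda}^{\frac{4}{N-2}}Z_{i,\ell}\,v-\tfrac{c_N(N-2)}{2}\int_{\partial\mathbb R^N_+}U_{\mathbf{x}_i,\Lambda}^{\frac{2}{N-2}}Z_{i,\ell}\,v\Bigr),
\]
consistently with the definition of $\langle Z_{i,\ell},\phi\rangle$ fixed in Section~\ref{riduzione}.

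The plan is then to choose $v=\partial_r W_{r,\Lambda}+\partial_r\phi$ and $v=\partial_\Lambda W_{r,\Lambda}+\partial_\Lambda\phi$, exploiting the identities $\partial_r W_{r,\Lambda}=\sum_i Z_{i,1}$ and $\partial_\Lambda W_{r,\Lambda}=\sum_i Z_{i,2}$. The pairings involving $\partial_r\phi$ and $\partial_\Lambda\phi$ are handled by differentiating the orthogonality constraints $\langle Z_{i,\ell},\phi\rangle=0$, which gives $\langle Z_{i,\ell},\partial_r\phi\rangle=-\langle\partial_r Z_{i,\ell},\phi\rangle$ (and similarly for $\Lambda$), hence a quantity of strictly lower order by the bound $\|\phi\|_*\lesssim\mu^{-\mathfrak m/2-\sigma}$ of Proposition~\ref{prop.intermediate_nonlinear_problem}. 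The two criticality equations then collapse to a homogeneous $2\times 2$ system $M(\mathfrak c_1,\mathfrak c_2)^{\!\top}=\mathbf{0}$, with
\[
M_{s\ell}=\sum_{i,j=1}^{k}\Bigl(\int_{\mathbb R^N_+}U_{\mathbf{x}_i,\Lambda}^{\frac{4}{N-2}}Z_{i,\ell}Z_{j,s}-\tfrac{c_N(N-2)}{2}\int_{\partial\mathbb R^N_+}U_{\mathbf{x}_i,\Lambda}^{\frac{2}{N-2}}Z_{i,\ell}Z_{j,s}\Bigr)+o(k),
\]
the $o(k)$ absorbing the $\partial\phi$--contributions and the corrections coming from the replacement of $K(|y|/\mu)$ and $H(|\bar y|/\mu)$ by their limiting values $1$ and $\mathfrak D/\sqrt{N(N-1)}$.

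Because the bubbles concentrate at well-separated points $\mathbf{x}_i$ (with mutual distance $\sim\mu/k$), the off-diagonal $i\ne j$ terms in $M_{s\ell}$ are negligible, and $M$ coincides, to leading order, with $k$ times the $2\times 2$ Gram-type block generated by a single bubble $U_{\mathbf{x}_1,\Lambda}$. This block is the quadratic form associated with the linearised operator \eqref{lblin} evaluated on the kernel pair $(Z_{1,1},Z_{1,2})$, which reproduces the radial translation combination of $\mathfrak z^1,\mathfrak z^2$ and the scaling generator $\mathfrak z^0$ of \eqref{zn}; by the non-degeneracy result \cite[Theorem~2.1]{Cruz-BlazquezPistoiaVaira_2022} these two directions are linearly independent and the block is non-singular. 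Therefore $M$ is invertible for $k$ large enough, which forces $\mathfrak c_1=\mathfrak c_2=0$.

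The main technical obstacle is the quantitative verification that the $o(k)$ remainder is indeed negligible compared with the diagonal of the leading block: one has to control the cross-bubble interactions (using the separation $|\mathbf{x}_i-\mathbf{x}_j|\gtrsim\mu/k$), the pairings with $\partial_r\phi$ and $\partial_\Lambda\phi$ (via Proposition~\ref{prop.intermediate_nonlinear_problem}), and the perturbative contributions of $K(|y|/\mu)-1$ and $H(|\bar y|/\mu)-\mathfrak D/\sqrt{N(N-1)}$. These estimates proceed with the weighted norms introduced in Section~\ref{sec.notaions} together with Lemma~\ref{lemma.appendix_estimates_1}, and they parallel the analogous arguments already used in \cite{WeiYan_2010a,WangZhao_2013}.
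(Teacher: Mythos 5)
Your overall strategy is the same as the paper's: rewrite the criticality conditions $\partial_r F=\partial_\Lambda F=0$ as a linear system $M(\mathfrak c_1,\mathfrak c_2)^\top=\mathbf 0$ and show the $2\times2$ matrix $M$ is non-singular for $k$ large, using the separation of the bubbles to reduce the leading behaviour to a single-bubble block and the smallness of $\phi$ (Proposition~\ref{prop.intermediate_nonlinear_problem}) together with the differentiated orthogonality constraints to control the $\partial_r\phi,\partial_\Lambda\phi$ contributions. That matches the paper's proof step by step.

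Two points, however, are imprecise and should be tightened. First, the sign in your weak formulation is off. Testing the interior equation against $v$, integrating by parts (the outer normal on $\{y_N=0\}$ is $-e_N$), and substituting the boundary equation produces $J'(W_{r,\Lambda}+\phi)[v]=\sum_\ell\mathfrak c_\ell\sum_i\bigl(\int_{\mathbb R^N_+}U_{\mathbf{x}_i,\Lambda}^{4/(N-2)}Z_{i,\ell}v+2(N-1)\int_{\partial\mathbb R^N_+}U_{\mathbf{x}_i,\Lambda}^{2/(N-2)}Z_{i,\ell}v\bigr)$, i.e.\ with a \emph{plus} sign on the boundary term, which is also what the paper's $M_{s,\ell}$ displays; the pairing $\langle Z_{i,\ell},\cdot\rangle$ in Section~\ref{riduzione} (with the minus sign and coefficients $-2\sqrt{N(N-1)}\mathfrak D$, $N+2$) is \emph{not} the pairing that arises here --- it is the bilinear form coming from the linearized equation \eqref{lblin} with the gradient term removed, and should not be conflated with $J'$. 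Second, identifying the leading block as ``the quadratic form associated with the linearised operator \eqref{lblin} evaluated on the kernel pair'' is incorrect: that quadratic form vanishes identically on its own kernel, so it cannot be the non-singular block. What actually appears (to leading order) is the Gram matrix of $Z_{1,1},Z_{1,2}$ with respect to the positive weighted pairing $(u,v)\mapsto\int_{\mathbb R^N_+}U^{4/(N-2)}uv+\mathrm{const}\int_{\partial\mathbb R^N_+}U^{2/(N-2)}uv$, and its invertibility follows from the fact that $Z_{1,1},Z_{1,2}$ are non-zero and linearly independent. The paper sidesteps this framing entirely by computing the diagonal entries $M_{\ell,\ell}\sim k\bigl(\int U^{4/(N-2)}Z_{1,\ell}^2+\int_\partial U^{2/(N-2)}Z_{1,\ell}^2\bigr)>0$ directly and showing $M_{1,2},M_{2,1}=k\,\mathcal O(\mu^{-\mathfrak m/2-\sigma})=o(k)$, which is cleaner and does not require an appeal to the non-degeneracy theorem at this stage.
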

\begin{proof}
Since $\frac{\partial F}{\partial r}= \frac{\partial F}{\partial \Lambda}=0 \vspace{5pt}$, we only need to show the non-degeneracy of the coefficient matrix with respect to $(\mathfrak c_1,\mathfrak c_2)$ in the following system.
  \begin{align*}
    \begin{pmatrix}
      M_{1,1} & M_{1,2}\\
      M_{2,1} & M_{2,2}
    \end{pmatrix}
    \begin{pmatrix}
    \mathfrak c_1 \vspace{5pt}\\ \mathfrak c_2
    \end{pmatrix} =
    \begin{pmatrix}
      \frac{\partial F}{\partial r} \vspace{10pt}\\
      \frac{\partial F}{\partial \Lambda}
    \end{pmatrix},
  \end{align*}
  where, for $\ell=1,2$,
  \begin{align*}
    \begin{aligned}
      &M_{1,\ell} := \int_{\mathbb{R}_{+}^{N}}\sum_{i=1}^{k} U_{\mathbf{x},\Lambda}^{\frac{4}{N-2}} Z_{i,\ell}\left(\frac{\partial W_{r,\Lambda}}{\partial r}+\frac{\partial \phi}{\partial r}\right)+\int_{\partial\mathbb{R}_{+}^{N}}\sum_{i=1}^{k} U_{\mathbf{x},\Lambda}^{\frac{2}{N-2}} Z_{i,\ell}\left(\frac{\partial W_{r,\Lambda}}{\partial r}+\frac{\partial \phi}{\partial r}\right),\\
      &M_{2,\ell} :=  \int_{\mathbb{R}_{+}^{N}}\sum_{i=1}^{k} U_{\mathbf{x},\Lambda}^{\frac{4}{N-2}} Z_{i,\ell}\left(\frac{\partial W_{r,\Lambda}}{\partial \Lambda}+\frac{\partial \phi}{\partial \Lambda}\right)+\int_{\partial\mathbb{R}_{+}^{N}}\sum_{i=1}^{k} U_{\mathbf{x},\Lambda}^{\frac{2}{N-2}} Z_{i,\ell}\left(\frac{\partial W_{r,\Lambda}}{\partial \Lambda}+\frac{\partial \phi}{\partial \Lambda}\right).
    \end{aligned}
  \end{align*}
  From \eqref{eq.Z_i_1_2_expansion_1} and \eqref{eq.Z_i_1_2_expansion_2}, by direct calculation,  we deduce that
  \begin{align*}
    \left|\frac{\partial Z_{i,\ell}}{\partial r}\right| \leq C U_{\mathbf{x}_{i},\Lambda} \quad\text{and}\quad \left|\frac{\partial Z_{i,\ell}}{\partial \Lambda}\right| \leq C U_{\mathbf{x}_{i},\Lambda}, \quad\text{for}~ \ell=1,2.
  \end{align*}
  Then
$$
  \begin{aligned}
&\int_{\mathbb{R}_{+}^{N}}\sum_{i=1}^{k} U_{\mathbf{x},\Lambda}^{\frac{4}{N-2}} Z_{i,1}\left(\frac{\partial W_{r,\Lambda}}{\partial r}+\frac{\partial \phi}{\partial r}\right)=  \int_{\mathbb{R}_{+}^{N}}\sum_{i=1}^{k} U_{\mathbf{x},\Lambda}^{\frac{4}{N-2}} Z_{i,1}\sum_{j=1}^{k}Z_{j,1}+ \int_{\mathbb{R}_{+}^{N}}\sum_{i=1}^{k} U_{\mathbf{x},\Lambda}^{\frac{4}{N-2}} Z_{i,1}\frac{\partial \phi}{\partial r}\\
  &\quad = \int_{\mathbb{R}_{+}^{N}}\sum_{i=1}^{k} U_{\mathbf{x},\Lambda}^{\frac{4}{N-2}} Z_{i,1}Z_{1,1}+\mathcal O\Biggl(\sum_{i=1}^{k}\sum_{j\neq i} \frac{1}{|\mathbf{x}_i-\mathbf{x}_{j}|^{N-2}}\Biggr) + \int_{\mathbb{R}_{+}^{N}}\sum_{i=1}^{k}\phi \frac{\partial }{\partial r}\left(U_{\mathbf{x},\Lambda}^{\frac{4}{N-2}} Z_{i,1}\right)\\
  &\quad = k \int_{\mathbb{R}_{+}^{N}} U_{\mathbf{x},\Lambda}^{\frac{4}{N-2}} Z_{1,1}^2 + k \mathcal O\left(\frac{1}{\mu}\right)^{\mathfrak m} + O\left(\|\phi\|_{*}\int_{\mathbb{R}_{+}^{N}}\sum_{i=1}^{k}\frac{1}{(1+|y-\mathbf{x}_{i}|)^{N}}\sum_{j}^{k} \frac{1}{(1+|y-\mathbf{x}_{j}|)^{\frac{N+2}{2}+\tau}} d y\right)\\
  &\quad =  k \int_{\mathbb{R}_{+}^{N}} U_{\mathbf{x},\Lambda}^{\frac{4}{N-2}} Z_{1,1}^2+ k \mathcal O\left(\frac{1}{\mu}\right)^{\mathfrak m} + kO\left(\frac{1}{\mu}\right)^{\frac{\mathfrak m}{2}+\sigma} \\
  &\quad = k \int_{\mathbb{R}_{+}^{N}} U_{\mathbf{x},\Lambda}^{\frac{4}{N-2}} Z_{1,1}^2 + k\mathcal O\left(\frac{1}{\mu}\right)^{\frac{\mathfrak m}{2}+\sigma}.
  \end{aligned}
$$
Similarly,
\begin{align*}
  \begin{aligned}
    &\int_{\partial\mathbb{R}_{+}^{N}}\sum_{i=1}^{k} U_{\mathbf{x},\Lambda}^{\frac{2}{N-2}} Z_{i,1}\left(\frac{\partial W_{r,\Lambda}}{\partial r}+\frac{\partial \phi}{\partial r}\right)=   k \int_{\partial\mathbb{R}_{+}^{N}} U_{\mathbf{x},\Lambda}^{\frac{2}{N-2}} Z_{1,1}^2 + k\mathcal O\left(\frac{1}{\mu}\right)^{\frac{\mathfrak m}{2}+\sigma}.
\end{aligned}
\end{align*}
Thus, 
\[M_{1,1}= k \int_{\mathbb{R}_{+}^{N}} U_{\mathbf{x},\Lambda}^{\frac{4}{N-2}} Z_{1,1}^2 + k \int_{\partial\mathbb{R}_{+}^{N}} U_{\mathbf{x},\Lambda}^{\frac{2}{N-2}} Z_{1,1}^2 + k\mathcal O\left(\frac{1}{\mu}\right)^{\frac{\mathfrak m}{2}+\sigma}.\]
Using the same discussion, we have
\begin{align*}
  \begin{aligned}
    &M_{2,2}= k \int_{\mathbb{R}_{+}^{N}} U_{\mathbf{x},\Lambda}^{\frac{4}{N-2}} Z_{1,2}^2 + k \int_{\partial\mathbb{R}_{+}^{N}} U_{\mathbf{x},\Lambda}^{\frac{2}{N-2}} Z_{1,2}^2 + k\mathcal O\left(\frac{1}{\mu}\right)^{\frac{\mathfrak m}{2}+\sigma},\\
    &M_{1,2} = k\mathcal O\left(\frac{1}{\mu}\right)^{\frac{\mathfrak m}{2}+\sigma}, \quad  M_{2,1} = k\mathcal O\left(\frac{1}{\mu}\right)^{\frac{\mathfrak m}{2}+\sigma}.
  \end{aligned}
\end{align*}
This completes the proof.
\end{proof}
We can therefore expand the reduced functional. \\ Before stating the result, we remark that,
 for any $\beta_{0} \geq \frac{N-2-\mathfrak m}{N-2}$ and large $k$,
  
  \begin{align*}
    \begin{aligned}
      \sum_{j=2}^k \frac{1}{\left|x_{j}-x_{1}\right|^{\beta_{0}}}  =\frac{1}{2^{\beta_0}} \sum_{j=2}^k \frac{1}{r^{\beta_0}\left(\sin \frac{|j-1|\pi}{k}\right)^{\beta_0}} \leq \frac{C k^{\beta_{0}}}{\mu^{\beta_{0}}} \sum_{j=2}^k \frac{1}{j^{\beta_{0}}}.
    \end{aligned}
  \end{align*}
  
  Since $\mu=k^{\frac{N-2}{N-2-\mathfrak m}}$,  by symmetry, it holds
  \begin{align}\label{eq.lemma.appendix_estimates_3_temp}
    \begin{aligned}
      \sum_{j=2}^k \frac{1}{\left|x_{j}-x_{1}\right|^{\beta_{0}}}  \lesssim
     \begin{cases}\frac{ k^{\beta_{0}} }{\mu^{\beta_{0}}} = O \left(\mu^{-\frac{\mathfrak m\beta_0}{N-2}}\right) , & \beta_{0} > 1, \vspace{10pt}\\ \frac{ k^{\beta_0}\ln k}{\mu^{\beta_{0}}}= O \left(\mu^{-\frac{\mathfrak m}{N-2}}\ln\mu\right), & \beta_{0} = 1, \vspace{10pt}\\ \frac{ k}{\mu^{\beta_{0}}} = O\left(\mu^{\frac{N-2-\mathfrak m}{N-2}-\beta_0}\right), & \beta_{0} < 1.\end{cases}
    \end{aligned}
  \end{align}

\begin{proposition}\label{reduced}
If $k$ is large enough, we have that
$$\begin{aligned} F(r, \Lambda)&=J(W_{r, \Lambda})+ \mathcal O\left(\frac{k}{\mu^{\mathfrak m+\sigma}}\right)\\
&=k\left[A-B\sum_{i=2}^k\frac{1}{\Lambda^{N-2}|\mathbf{x}_i-\mathbf{x}_1|^{N-2}}-\frac{1}{2^*}\frac{c_0}{\Lambda^m \mu^m}\mathfrak d_{3, N}+(N-2)\frac{d_0}{\Lambda^n \mu^n}\mathfrak d_{5, N}\right]\\
&+k\left[-\frac{m(m-1)c_0}{2\cdot2^*}\mathfrak d_{4, N}\frac{(\mu r_0-r)^2}{\Lambda^{m-2}\mu^m}+\frac{n(n-1)(N-2)d_0}{2}\mathfrak d_{6, N}\frac{(\mu r_0-r)^2}{\Lambda^{n-2}\mu^n}\right]\\
&+\mathcal O\left(\frac{k}{\mu^{\mathfrak m+\sigma}}\right)  +\mathcal O\left(\frac{k}{\mu^{\mathfrak m}}|\mu r_0-r|^{2+\theta}\right)
\end{aligned}$$
where $\sigma>0$ and $\theta>0$ are fixed and small and $A,B,\mathfrak d_{i, N}$,  $i=3, 4, 5, 6$, are some positive constants. 
\end{proposition}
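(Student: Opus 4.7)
The plan is to carry out the expansion in two stages: first reduce $F(r,\Lambda)$ to $J(W_{r,\Lambda})$, then compute the Taylor expansion of $J(W_{r,\Lambda})$ in the parameters.

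\textit{Stage 1: From $F$ to $J(W_{r,\Lambda})$.} Setting $W = W_{r,\Lambda}$ and applying Taylor's formula,
\[
F(r,\Lambda) - J(W) = DJ(W)[\phi] + \int_0^1(1-t)\,D^2J(W + t\phi)[\phi,\phi]\,dt.
\]
After an integration by parts, $DJ(W)[\phi]$ is a combination of $\int_{\mathbb R^N_+}\mathscr E_{in}\phi$ and $\int_{\partial\mathbb R^N_+}\mathscr E_{bd}\phi$. Bounding these via the weighted norms together with Lemma~\ref{lemma.appendix_estimates_1}, Lemma~\ref{lemma.error_term_estimates}, and Proposition~\ref{prop.intermediate_nonlinear_problem} yields $O(k\mu^{-\mathfrak m-\sigma})$. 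The quadratic remainder is even smaller since $\|\phi\|_*^2$ loses a full power $\mu^{-\mathfrak m/2-\sigma}$ compared with the linear term.

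\textit{Stage 2: Expansion of $J(W_{r,\Lambda})$.} By the $\mathbb Z_k$-symmetry, every integral reduces to $k$ copies over $\Omega_1$ (or $\widetilde\Omega_1$ on the boundary). Write
\[
J(W) = J_0(W) + \frac{1}{2^*}\int_{\mathbb R^N_+}\bigl(K(|y|/\mu)-1\bigr)W^{2^*} - (N-2)\int_{\partial\mathbb R^N_+}\bigl(H(|\bar y|/\mu)-H(r_0)\bigr)W^{2^\sharp},
\]
where $J_0$ is computed with $K\equiv 1$ and $H\equiv H(r_0)$. The model part $J_0(W)$ decomposes into self-energies, producing the contribution $kA$, plus cross-interactions; using the equation satisfied by $U_{\mathbf 0,1}$ and its far-field asymptotic $U_{\mathbf x_i,\Lambda}(y)\sim \alpha_N\Lambda^{(N-2)/2}|\Lambda(y-\mathbf x_i)|^{-(N-2)}$, the cross-terms yield $-kB\sum_{i\ge 2}\Lambda^{-(N-2)}|\mathbf x_i-\mathbf x_1|^{-(N-2)}$, mirroring the computations in \cite{WeiYan_2010a,WangZhao_2013}. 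For the $K$-correction, change variables $y = \mathbf x_1 + z/\Lambda$, so that $U_{\mathbf x_1,\Lambda}^{2^*}(y)\,dy = U_{\mathbf 0,1}^{2^*}(z)\,dz$. Since $\mathbf x_1/r = (1,0,\ldots,0)$,
\[
|y|/\mu - r_0 = \frac{r-\mu r_0}{\mu} + \frac{z_1}{\mu\Lambda} + O\!\left(\frac{|z|^2}{r\mu\Lambda^2}\right),
\]
and $(\mathbf K)$ gives $K(|y|/\mu) - 1 = -c_0\,|a+b|^m + \text{h.o.t.}$ with $a = (r-\mu r_0)/\mu$ and $b = z_1/(\mu\Lambda)$. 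Taylor-expanding $|a+b|^m$ in $a$ around $0$: the $a^0$ piece gives $-c_0\mathfrak d_{3,N}/(\Lambda^m\mu^m)$ after integration; the $a^1$ piece vanishes by parity in $z_1$ against $U_{\mathbf 0,1}^{2^*}$; the $a^2$ piece produces the displayed quadratic correction with $\mathfrak d_{4,N}$; the $(2+\theta)$-remainder is absorbed in $O(k|\mu r_0-r|^{2+\theta}/\mu^{\mathfrak m})$. Contributions from $U_{\mathbf x_j,\Lambda}$, $j\ne 1$, in $W^{2^*}$ are absorbed in $O(k\mu^{-\mathfrak m-\sigma})$ via \eqref{eq.lemma.appendix_estimates_3_temp}. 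The boundary $H$-correction is strictly analogous, with $n,d_0,H,2^\sharp$ replacing $m,c_0,K,2^*$ and $\mathfrak d_{5,N},\mathfrak d_{6,N}$ the corresponding boundary-trace integrals.

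\textit{Main obstacle.} The delicate step is the Taylor expansion of $|a+b|^m$ for non-integer $m$: since $t\mapsto|t|^m$ is only $C^{1,\theta}$, the expansion
\[
|a+b|^m = |b|^m + m|b|^{m-2}b\,a + \tfrac{m(m-1)}{2}|b|^{m-2}a^2 + O\!\bigl(|b|^{m-2-\theta}|a|^{2+\theta}\bigr)
\]
is valid only in the regime $|b|\gtrsim|a|$, and the complementary region must be controlled by direct pointwise bounds. One must then verify, using the constraint $|r-\mu r_0|\le \mu^{-\bar\theta}$, that the remainders are absorbable into $O(k\mu^{-\mathfrak m-\sigma}) + O(k|\mu r_0-r|^{2+\theta}/\mu^{\mathfrak m})$. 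The same subtlety arises on the boundary for $H$ with exponent $n$; all other steps are standard bubble-sum computations.
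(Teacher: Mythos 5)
Your approach is essentially the same as the paper's: reduce $F(r,\Lambda)$ to $J(W_{r,\Lambda})$ using the bound on $\|\phi\|_*$, then expand $J(W_{r,\Lambda})$ bubble-by-bubble over $\Omega_1$ via the far-field asymptotics and the rescaling $y = \mathbf{x}_1 + z/\Lambda$. The only organizational difference is bookkeeping in Stage~2: the paper splits $J(W)$ into the kinetic part $(I)$ and the full potential $(II)$ and inserts $K-1$, $H-H(r_0)$ afterwards, while you factor out a ``model energy'' $J_0$ at constant $K,H$ up front; both routes evaluate the same integrals and arrive at $\mathfrak d_{3,N},\ldots,\mathfrak d_{6,N}$ in the same way.

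Two small remarks. First, your Stage~1 justification that the quadratic remainder ``loses a full power $\mu^{-\mathfrak m/2-\sigma}$'' is imprecise: since $\|\mathscr E_{in}\|_{**}$ and $\|\phi\|_*$ are both of size $\mu^{-\mathfrak m/2-\sigma}$, the linear term $DJ(W)[\phi]$ and the quadratic remainder are actually of the same order $O(k\mu^{-\mathfrak m-2\sigma})$; the conclusion $F=J(W)+O(k\mu^{-\mathfrak m-\sigma})$ still holds, but not because one dominates the other. Second, your flag about the Taylor expansion of $|a+b|^m$ for non-integer $m$ is a legitimate technical subtlety that the paper sweeps under ``arguing as in \cite{WeiYan_2010a}''; one must indeed split into the regimes $|b|\gtrsim|a|$ and $|b|\lesssim|a|$ (the latter contributing only to the $O(\mu^{-\mathfrak m}|\mu r_0-r|^{2+\theta})$ remainder, using $|r-\mu r_0|\le \mu^{-\bar\theta}$), and the same applies to $H$ and $n$. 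Note though that for $m\in[2,N-2)$ the map $t\mapsto|t|^m$ is in fact $C^2$ (it is the third derivative that fails to be bounded near the origin when $m\in(2,3)$), so the obstruction is to the $C^{2,1}$ Taylor remainder rather than, as you write, to $C^{1,\theta}$ regularity.
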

\begin{proof}
Arguing as in \cite{WangZhao_2013} and in \cite{WeiYan_2010a} it is easy to show that

$$F(r, \Lambda)=J(W_{r, \Lambda})+\mathcal O\left(\frac{k}{\mu^{\mathfrak m+\sigma}}\right).$$
Now
$$J(W_{r, \Lambda}):=\underbrace{\frac{c_N}{2}\int_{\mathbb R^N_+}|\nabla W_{r, \Lambda}|^2}_{(I)}\underbrace{+\frac{1}{2^*}\int_{\mathbb R^N_+} K\left(\frac{|y|}{\mu}\right)(W_{r, \Lambda})^{2^*}_+-(N-2)\int_{\partial \mathbb R^N_+}H\left(\frac{|\bar y|}{\mu}\right)(W_{r, \Lambda})^{2^\sharp}_+}_{(II)}$$
Now, since $U_{\mathbf{x}_j,\Lambda}$ satisfies \eqref{eq:bubble} we get
$$\begin{aligned}(I)&=\frac{c_N}{2}\sum_{j=1}^k\int_{\mathbb R^N_+}|\nabla U_{\mathbf{x}_j,\Lambda}|^2+\frac{c_N}{2}\sum_{i\neq j}\int_{\mathbb R^N_+}\nabla U_{\mathbf{x}_j,\Lambda}\nabla U_{\mathbf{x}_i,\Lambda}\\
&=-\frac 12 \sum_{j=1}^k \int_{\mathbb R^N_+}U_{\mathbf{x}_j,\Lambda}^{2^*}+\frac{(N-1)}{\sqrt{N(N-1)}}\mathfrak D \sum_{j=1}^k\int_{\partial\mathbb R^N_+}U_{\mathbf{x}_j,\Lambda}^{2^\sharp}\\
&-\frac 12 \sum_{i\neq j}^k \int_{\mathbb R^N_+}U_{\mathbf{x}_j,\Lambda}^{2^*-1}U_{\mathbf{x}_i,\Lambda}+\frac{(N-1)}{\sqrt{N(N-1)}}\mathfrak D \sum_{i\neq j}^k\int_{\partial\mathbb R^N_+}U_{\mathbf{x}_j,\Lambda}^{2^\sharp-1}U_{\mathbf{x}_i,\Lambda}\\
&=-\frac 12 k \mathfrak a_N+\frac{(N-1)}{\sqrt{N(N-1)}}\mathfrak Dk\mathfrak b_N\\
&-\frac 12 k\sum_{i=2}^k \int_{\mathbb R^N_+}U_{\mathbf{x}_1,\Lambda}^{2^*-1}U_{\mathbf{x}_i,\Lambda}+\frac{(N-1)}{\sqrt{N(N-1)}}\mathfrak Dk \sum_{i=2}^k\int_{\partial\mathbb R^N_+}U_{\mathbf{x}_1,\Lambda}^{2^\sharp-1}U_{\mathbf{x}_i,\Lambda}\\
&=-\frac 12 k \mathfrak a_N+\frac{(N-1)}{\sqrt{N(N-1)}}\mathfrak Dk\mathfrak b_N\\
&-\frac 12 k \mathfrak d_{1, N}\sum_{i=2}^k\frac{1}{\Lambda^{N-2}|\mathbf{x}_i-\mathbf{x}_1|^{N-2}}+\frac{(N-1)}{\sqrt{N(N-1)}}\mathfrak Dk\mathfrak d_{2, N}\sum_{i=2}^k\frac{1}{\Lambda^{N-2}|\mathbf{x}_i-\mathbf{x}_1|^{N-2}}\\
&+\mathcal O\left(k\sum_{i=2}^k\frac{1}{|\mathbf{x}_i-\mathbf{x}_1|^{N-2+\sigma}}\right)\\
&=-\frac 12 k \mathfrak a_N+\frac{(N-1)}{\sqrt{N(N-1)}}\mathfrak Dk\mathfrak b_N\\
&+k\left(-\frac 12\mathfrak d_{1, N}+\frac{(N-1)}{\sqrt{N(N-1)}}\mathfrak D\mathfrak d_{2, N} \right)\sum_{i=2}^k\frac{1}{\Lambda^{N-2}|\mathbf{x}_i-\mathbf{x}_1|^{N-2}}+\mathcal O\left(k\sum_{i=2}^k\frac{1}{|\mathbf{x}_i-\mathbf{x}_1|^{N-2+\sigma}}\right)\\
\end{aligned}$$ where
$$\mathfrak a_N:=\int_{\mathbb R^N_+}U_{\mathbf{0},1}^{2^*};\quad \mathfrak b_N:=\int_{\partial\mathbb R^N_+}U_{\mathbf{0},1}^{2^\sharp}$$
and
$$\mathfrak d_{1, N}:=\alpha_N\int_{\mathbb R^N_+}U_{\mathbf{0},1}^{2^*-1};\quad \mathfrak d_{2, N}:=\alpha_N\int_{\partial\mathbb R^N_+}U_{\mathbf{0},1}^{2^\sharp-1}.$$
Here we use also the fact that (by using the explicit expression of $U_{\mathbf{x}_i,\Lambda}$) $$\begin{aligned} \int_{\mathbb R^N_+}U_{\mathbf{x}_1,\Lambda}^{2^*-1}U_{\mathbf{x}_i,\Lambda}&=\alpha_N^{2^*}\int_{\mathbb R^N_+} \frac{\Lambda^{\frac{N+2}{2}}}{\left(\Lambda^2|\bar y-\mathbf{x}_1|^2+(\Lambda y_N+\mathfrak D)^2-1\right)^{\frac{N+2}{2}}}\frac{\Lambda^{\frac{N-2}{2}}}{\left(\Lambda^2|\bar y-\mathbf{x}_i|^2+(\Lambda y_N+\mathfrak D)^2-1\right)^{\frac{N-2}{2}}}\\
&=\alpha_N^{2^*}\int_{\mathbb R^N_+} \frac{1}{\left(|\bar y|^2+(y_N+\mathfrak D)^2-1\right)^{\frac{N+2}{2}}}\frac{1}{\left(|\bar y+\Lambda(\mathbf{x}_1-\mathbf{x}_i)|^2+(y_N+\mathfrak D)^2-1\right)^{\frac{N-2}{2}}}\\
&=\frac{\alpha_N}{\Lambda^{N-2}|\mathbf{x}_1-\mathbf{x}_i|^{N-2}}\int_{\mathbb R^N_+}U_{\mathbf{0},1}^{2^*}=\mathfrak d_{1, N}\frac{1}{\Lambda^{N-2}|\mathbf{x}_1-\mathbf{x}_i|^{N-2}}\end{aligned}$$ and we can argue similarly for $\int_{\partial\mathbb R^N_+}U_{\mathbf{x}_1,\Lambda}^{2^\sharp-1}U_{\mathbf{x}_i,\Lambda}$.\\
Now, by using the estimates (for $a, b>0$ and $p>2$) $$(a+b)^p-a^p-pa^{p-1}b=\mathcal O(a^{p-1}b^2)+\mathcal O(b^p)$$ we get
$$\begin{aligned}(II)&=\frac{1}{2^*}k\int_{\Omega_1}K\left(\frac{|y|}{\mu}\right)U_{\mathbf{x}_1,\Lambda}^{2^*}+k\sum_{j=2}^k\int_{\Omega_1}K\left(\frac{|y|}{\mu}\right)U_{\mathbf{x}_1,\Lambda}^{2^*-1}U_{\mathbf{x}_j,\Lambda}\\
&-(N-2)k\int_{\widetilde\Omega_1}H\left(\frac{|\bar y|}{\mu}\right)U_{\mathbf{x}_1,\Lambda}^{2^\sharp}-2(N-1)k\sum_{j=2}^k\int_{\widetilde\Omega_1}H\left(\frac{|\bar y|}{\mu}\right)U_{\mathbf{x}_1,\Lambda}^{2^\sharp-1}U_{\mathbf{x}_j,\Lambda}\\
&+\mathcal O\left(k\int_{\Omega_1}U_{\mathbf{x}_1,\Lambda}^{2^*-2}\left(\sum_{j=2}^kU_{\mathbf{x}_j,\Lambda}\right)^2\right)+\mathcal O\left(k\int_{\Omega_1}\left(\sum_{j=2}^kU_{\mathbf{x}_j,\Lambda}\right)^{2^*}\right)\\
&+\mathcal O\left(k\int_{\widetilde\Omega_1}U_{\mathbf{x}_1,\Lambda}^{2^\sharp-2}\left(\sum_{j=2}^kU_{\mathbf{x}_j,\Lambda}\right)^2\right)+\mathcal O\left(k\int_{\widetilde\Omega_1}\left(\sum_{j=2}^kU_{\mathbf{x}_j,\Lambda}\right)^{2^\sharp}\right).
\end{aligned}$$
We remark that, by Lemma \ref{lemma.appendix_estimates_1},
for any $y\in\Omega_1,0<\alpha<N-2$ we have, independently on $\Lambda$ (we recall that $\Lambda$ is bounded so it does not influence the rate),
$$\begin{aligned}\sum_{j=2}^kU_{\mathbf{x}_j,\Lambda}&\lesssim \sum_{j=2}^k\frac{1}{(|y-\mathbf{x}_j|+1)^{N-2}}\lesssim \sum_{j=2}^k\frac{1}{(|y-\mathbf{x}_j|+1)^{N-2-\alpha}}\frac{1}{(|y-\mathbf{x}_j|+1)^{\alpha}}\\
&\lesssim \sum_{j=2}^k\frac{1}{|\mathbf{x}_j-\mathbf{x}_1|^{\alpha}}\frac{1}{(|y-\mathbf{x}_1|+1)^{N-2-\alpha}}\lesssim \left(\frac{k}{\mu}\right)^{\alpha}\frac{1}{(|y-\mathbf{x}_1|+1)^{N-2-\alpha}}.\end{aligned}$$
Then, for $\alpha>1$, using \eqref{eq.lemma.appendix_estimates_3_temp} we get
$$\int_{\Omega_1}U_{\mathbf{x}_1,\Lambda}^{2^*-2}\left(\sum_{j=2}^kU_{\mathbf{x}_j,\Lambda}\right)^2\lesssim \left(\frac{k}{\mu}\right)^{2\alpha}\int_{\mathbb R^N_+}\frac{1}{(|y-\mathbf{x}_1|+1)^{2N-2\alpha}}=\mathcal O\left(\left(\frac{k}{\mu}\right)^{2\alpha}\right).$$ 
Choosing $$\alpha:=(\mathfrak m+\sigma)\frac{N-2}{2\mathfrak m},$$
then
$$\int_{\Omega_1}U_{\mathbf{x}_1,\Lambda}^{2^*-2}\left(\sum_{j=2}^kU_{\mathbf{x}_j,\Lambda}\right)^2=\left(\frac{k}{\mu}\right)^{2\alpha}=\mathcal O\left(\frac{1}{\mu^{\mathfrak m+\sigma}}\right).$$
Similarly,
$$\int_{\Omega_1}\left(\sum_{j=2}^kU_{\mathbf{x}_j,\Lambda}\right)^{2^*}\lesssim \left(\frac{k}{\mu}\right)^{2^*\alpha}\int_{\mathbb R^N_+}\frac{1}{(|y-\mathbf{x}_1|+1)^{2^*(N-2-\alpha)}}=\mathcal O\left(\left(\frac{k}{\mu}\right)^{2^*\alpha}\right)$$
provided $1<\alpha<\frac{N-2}{2}$. Then we take
 $$\alpha:=(\mathfrak m+\sigma)\frac{N-2}{2^*\mathfrak m}=\frac{\mathfrak m+\sigma}{\mathfrak m}\frac{(N-2)^2}{2N},$$ so that
$$\int_{\Omega_1}\left(\sum_{j=2}^kU_{\mathbf{x}_j,\Lambda}\right)^{2^*}=\mathcal O\left(\left(\frac{k}{\mu}\right)^{2^*\alpha}\right)=\mathcal O\left(\frac{1}{\mu^{\mathfrak m+\sigma}}\right).$$
With a similar argument, we can also show that
$$\int_{\widetilde\Omega_1}U_{\mathbf{x}_1,\Lambda}^{2^\sharp-2}\left(\sum_{j=2}^kU_{\mathbf{x}_j,\Lambda}\right)^2=\mathcal O\left(\frac{1}{\mu^{\mathfrak m+\sigma}}\right),\quad \int_{\widetilde\Omega_1}\left(\sum_{j=2}^kU_{\mathbf{x}_j,\Lambda}\right)^{2^\sharp}=\mathcal O\left(\frac{1}{\mu^{\mathfrak m+\sigma}}\right).$$
Next, we obtain
$$\begin{aligned}
\sum_{j=2}^k\int_{\Omega_1}K\left(\frac{|y|}{\mu}\right)U_{\mathbf{x}_1,\Lambda}^{2^*-1}U_{\mathbf{x}_j,\Lambda}&=\sum_{j=2}^k\int_{\Omega_1}U_{\mathbf{x}_1,\Lambda}^{2^*-1}U_{\mathbf{x}_j,\Lambda}+\sum_{j=2}^k\int_{\Omega_1}\left(K\left(\frac{|y|}{\mu}\right)-1\right)U_{\mathbf{x}_1,\Lambda}^{2^*-1}U_{\mathbf{x}_j,\Lambda}&\\
&=\mathfrak d_{1, N}\sum_{i=2}^k\frac{1}{\Lambda^{N-2}|\mathbf{x}_i-\mathbf{x}_1|^{N-2}}+\mathcal O\left(\frac{1}{\mu^{\mathfrak m+\sigma}}\right)\end{aligned}$$ since by assumption $(\bf K)$
$$\begin{aligned}\sum_{j=2}^k\int_{\Omega_1}\left(K\left(\frac{|y|}{\mu}\right)-1\right)U_{\mathbf{x}_1,\Lambda}^{2^*-1}U_{\mathbf{x}_j,\Lambda}&\lesssim \frac{1}{\mu^m}\sum_{j=2}^k\frac{1}{|\mathbf{x}_j-\mathbf{x}_1|^{N-2}}\int_{\mathbb R^N_+}||y-\mathbf{x}_1|-\mu r_0|^m U_{\mathbf{0},1}^{2^*-1}\\
&\lesssim \frac{1}{\mu^{m+\sigma}}.\end{aligned}$$ Analogously, it follows that
$$\begin{aligned}\sum_{j=2}^k\int_{\widetilde\Omega_1}H\left(\frac{|\bar y|}{\mu}\right)U_{\mathbf{x}_1,\Lambda}^{2^\sharp-1}U_{\mathbf{x}_j,\Lambda}=&\frac{\mathfrak D}{\sqrt{N(N-1)}}\sum_{j=2}^k\int_{\widetilde\Omega_1}U_{\mathbf{x}_1,\Lambda}^{2^\sharp-1}U_{\mathbf{x}_j,\Lambda}\\&+\sum_{j=2}^k\int_{\widetilde\Omega_1}\left(H\left(\frac{|\bar y|}{\mu}\right)-\frac{\mathfrak D}{\sqrt{N(N-1)}}\right)U_{\mathbf{x}_1,\Lambda}^{2^\sharp-1}U_{\mathbf{x}_j,\Lambda}\\
=&\frac{\mathfrak D}{\sqrt{N(N-1)}}\mathfrak d_{2, N}\sum_{i=2}^k\frac{1}{\Lambda^{N-2}|\mathbf{x}_i-\mathbf{x}_1|^{N-2}}+\mathcal O\left(\frac{1}{\mu^{\mathfrak m+\sigma}}\right)\\
\end{aligned}$$

Now, arguing as in \cite{WeiYan_2010a} we get that
$$
  \begin{aligned}
\int_{\Omega_1}K\left(\frac{|y|}{\mu}\right)U_{\mathbf{x}_1,\Lambda}^{2^*}=& \int_{\mathbb R^N_+} U_{\mathbf{0},1}^{2^*}-\frac{c_0}{\mu^m}\int_{\mathbb R^N_+}||y-\mathbf{x}_1|-\mu r_0|^mU_{\mathbf{0},1}^{2^*}+\mathcal O\left(\frac{1}{\mu^{\mathfrak m+\sigma}}\right)\\
=&\mathfrak a_N-\frac{c_0}{\Lambda^m\mu^m}\int_{\mathbb R^N_+}|y_1|^mU_{\mathbf{0},1}^{2^*}\\&-\frac 12 m(m-1)\frac{c_0}{\Lambda^{m-2}\mu^m}(\mu r_0-r)^2 \int_{\mathbb R^N_+}|y_1|^{m-2}U_{\mathbf{0},1}^{2^*}+\mathcal O\left(\frac{1}{\mu^{\mathfrak m+\sigma}}\right)\\
=&\mathfrak a_N-\frac{c_0}{\Lambda^m\mu^m}\mathfrak d_{3, N}-\frac 12 m(m-1)\frac{c_0}{\Lambda^{m-2}\mu^m} (\mu r_0-r)^2\mathfrak d_{4, N}\\
&+\mathcal O\left(\frac{1}{\mu^{\mathfrak m+\sigma}}\right) +\mathcal O\left(\frac{1}{\mu^{m}}|\mu r_0-r|^{2+\theta}\right)  \end{aligned}
$$
where
$$\mathfrak d_{3, N}:=\int_{\mathbb R^N_+}|y_1|^mU_{\mathbf{0},1}^{2^*},\quad \mathfrak d_{4, N}:= \int_{\mathbb R^N_+}|y_1|^{m-2}U_{\mathbf{0},1}^{2^*}.$$

Finally, arguing as in \cite{WangZhao_2013} we get that

$$\begin{aligned}
\int_{\widetilde\Omega_1}H\left(\frac{|\bar y|}{\mu}\right)U_{\mathbf{x}_1,\Lambda}^{2^\sharp}=&\frac{\mathfrak D}{\sqrt{N(N-1)}}\mathfrak b_N-\frac{d_0}{\Lambda^n\mu^n}\mathfrak d_{5, N}-\frac 12 n(n-1) \frac{d_0}{\Lambda^{n-2}\mu^n}(\mu r_0-r)^2\mathfrak d_{6, N}\\
&+\mathcal O\left(\frac{1}{\mu^{\mathfrak m+\sigma}}\right) +\mathcal O\left(\frac{1}{\mu^{n}}|\mu r_0-r|^{2+\theta}\right)   \end{aligned}$$

where
$$\mathfrak d_{5, N}:=\int_{\partial\mathbb R^N_+}|y_1|^nU_{\mathbf{0},1}^{2^\sharp},\quad \mathfrak d_{6, N}:= \int_{\partial\mathbb R^N_+}|y_1|^{n-2}U_{\mathbf{0},1}^{2^\sharp}.$$

Putting together the various terms we get
$$\begin{aligned}
J(W_{r, \Lambda})=&k\left[A-B\sum_{i=2}^k\frac{1}{\Lambda^{N-2}|\mathbf{x}_i-\mathbf{x}_1|^{N-2}}-\frac{1}{2^*}\frac{c_0}{\Lambda^m \mu^m}\mathfrak d_{3, N}+(N-2)\frac{d_0}{\Lambda^n \mu^n}\mathfrak d_{5, N}\right]\\
&+k\left[-\frac{m(m-1)c_0}{2\cdot2^*}\mathfrak d_{4, N}\frac{(\mu r_0-r)^2}{\Lambda^{m-2}\mu^m}+\frac{n(n-1)(N-2)d_0}{2}\mathfrak d_{6, N}\frac{(\mu r_0-r)^2}{\Lambda^{n-2}\mu^n}\right]\\
&+\mathcal O\left(\frac{k}{\mu^{\mathfrak m+\sigma}}\right)  +\mathcal O\left(\frac{k}{\mu^{\mathfrak m}}|\mu r_0-r|^{2+\theta}\right)\end{aligned}$$
where
$$A:=\left(\frac{1}{2^*}-\frac 1 2\right)\mathfrak a_N +\frac{\mathfrak D}{\sqrt{N(N-1)}}\mathfrak b_N$$
\begin{equation}\label{B}B:=-\frac 1 2\mathfrak d_{1, N}+\frac{N-1}{\sqrt{N(N-1)}}\mathfrak D \mathfrak d_{2, N}.\end{equation}
We remark that $B>0$ (see Remark \ref{oss}).
\end{proof}
Arguing as in Proposition 3.2 of \cite{WeiYan_2010a}
 and Proposition A.2 of  \cite{WangZhao_2013}, we can also show the following result.
 \begin{proposition}\label{der}
If $k$ is large enough, we have that
$$\begin{aligned} \frac{\partial F(r, \Lambda)}{\partial\Lambda}=&k\left[B\sum_{i=2}^k\frac{N-2}{\Lambda^{N-1}|\mathbf{x}_i-\mathbf{x}_1|^{N-2}}+\frac{1}{2^*}\frac{c_0m}{\Lambda^{m+1} \mu^m}\mathfrak d_{3, N}-(N-2)\frac{d_0n}{\Lambda^{n+1} \mu^n}\mathfrak d_{5, N}\right]\\
&+\mathcal O\left(\frac{k}{\mu^{\mathfrak m+\sigma}}\right)  +\mathcal O\left(\frac{k}{\mu^{\mathfrak m}}|\mu r_0-r|^{2}\right).
\end{aligned}$$
 \end{proposition}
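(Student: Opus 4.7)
The plan is to differentiate the identity $F(r,\Lambda)=J(W_{r,\Lambda}+\phi)$ via the chain rule and split the result into a principal piece equal to $\partial_\Lambda J(W_{r,\Lambda})$ plus a correction coming from the $\phi$-dependence. The principal piece will be computed by re-running the integral expansions of Proposition \ref{reduced} with one factor of $W_{r,\Lambda}$ replaced by its $\Lambda$-derivative, while the correction will be controlled using the orthogonality $\langle Z_{i,\ell},\phi\rangle=0$ together with the size estimates on $\phi$ and $\mathfrak{c}_\ell$ furnished by Proposition \ref{prop.intermediate_nonlinear_problem}.

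Concretely, I would write
$$\frac{\partial F}{\partial \Lambda}=J'(W_{r,\Lambda}+\phi)\bigl[\partial_\Lambda W_{r,\Lambda}\bigr]+J'(W_{r,\Lambda}+\phi)\bigl[\partial_\Lambda\phi\bigr],$$
and use \eqref{eq.intermediate_nonlinear_form} to replace $J'(W_{r,\Lambda}+\phi)$ by the Lagrange-multiplier combination $\sum_\ell\mathfrak c_\ell\sum_i U_{\mathbf x_i,\Lambda}^{4/(N-2)}Z_{i,\ell}$ in the bulk and the analogous boundary expression. Differentiating the orthogonality constraint yields $\langle Z_{i,\ell},\partial_\Lambda\phi\rangle=-\langle\partial_\Lambda Z_{i,\ell},\phi\rangle=O(\|\phi\|_*)$, since a direct computation from \eqref{eq.Z_i_1_2_expansion_1}--\eqref{eq.Z_i_1_2_expansion_2} gives $|\partial_\Lambda Z_{i,\ell}|\lesssim U_{\mathbf x_i,\Lambda}$. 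Combined with $|\mathfrak c_\ell|\lesssim\mu^{-\mathfrak m/2-\sigma}$ and $\|\phi\|_*\lesssim\mu^{-\mathfrak m/2-\sigma}$, both $\phi$-contributions are absorbed into $\mathcal O(k/\mu^{\mathfrak m+\sigma})$.

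For the principal term $\partial_\Lambda J(W_{r,\Lambda})$, rather than naively differentiating the $\mathcal O(k/\mu^{\mathfrak m+\sigma})$ remainder of Proposition \ref{reduced}---which is not a priori legitimate---I would repeat each of the expansions there with one factor of $W_{r,\Lambda}$ replaced by $\partial_\Lambda W_{r,\Lambda}=\sum_i Z_{i,2}$. Since $|Z_{i,2}|$ satisfies the same pointwise decay as $U_{\mathbf x_i,\Lambda}$, the interaction estimates, the bounds on $\sum_{j\ne 1}U_{\mathbf x_j,\Lambda}$ over $\Omega_1$, and the application of Lemma \ref{lemma.appendix_estimates_1} all carry over unchanged. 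The leading coefficients in the statement then come from the direct $\Lambda$-differentiation of $-B\Lambda^{-(N-2)}$, of $-c_0\mathfrak d_{3,N}/(2^*\Lambda^m\mu^m)$, and of $(N-2)d_0\mathfrak d_{5,N}/(\Lambda^n\mu^n)$, while the quadratic-in-$(\mu r_0-r)$ terms yield, after differentiation, contributions of order $\mathcal O(k|\mu r_0-r|^2/\mu^{\mathfrak m})$.

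The main technical obstacle is this re-establishment of the error bounds after the substitution $W_{r,\Lambda}\mapsto\partial_\Lambda W_{r,\Lambda}$: one must verify that all integral exponents still yield decay of order $\mu^{-\mathfrak m-\sigma}$, in particular when handling the cross terms $\sum_{j\ne 1}\int_{\Omega_1}(K(|y|/\mu)-1)U_{\mathbf x_1,\Lambda}^{2^\ast-1}Z_{j,2}$ and their boundary analogues involving $H$. Because $Z_{i,2}$ inherits the same decay exponent $N-2$ as $U_{\mathbf x_i,\Lambda}$ and the symmetry decomposition into $\Omega_\ell,\widetilde\Omega_\ell$ is unaffected, the estimates go through structurally identically to \cite[Proposition 3.2]{WeiYan_2010a} and \cite[Proposition A.2]{WangZhao_2013}, as the paper itself indicates.
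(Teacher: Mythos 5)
Your high-level plan — chain rule, extract $\partial_\Lambda J(W_{r,\Lambda})$ as the principal piece and recompute it by re-running the expansions (rather than illegitimately differentiating the remainder in Proposition~\ref{reduced}), then control the $\phi$-corrections — is the correct strategy and matches the one the paper delegates to \cite{WeiYan_2010a,WangZhao_2013}. However, there is a genuine gap in how you dispose of the $\phi$-corrections.

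You only account for $J'(W_{r,\Lambda}+\phi)[\partial_\Lambda\phi]$, using the Lagrange-multiplier identity and the differentiated orthogonality. But the other correction, $J'(W_{r,\Lambda}+\phi)[\partial_\Lambda W_{r,\Lambda}]-J'(W_{r,\Lambda})[\partial_\Lambda W_{r,\Lambda}]$, whose leading part is the bilinear term $J''(W_{r,\Lambda})[\phi,\partial_\Lambda W_{r,\Lambda}]$, is silently absorbed. That term is not small for free: a crude bound gives it size $O(k\|\phi\|_*)=O(k\mu^{-\mathfrak m/2-\sigma})$, which is larger than the leading contribution $O(k\mu^{-\mathfrak m})$ in the statement. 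The point you must invoke is that the leading part of $J''(W_{r,\Lambda})[\phi,\sum_jZ_{j,2}]$ — namely the piece with $K\equiv 1$, $H\equiv\frac{\mathfrak D}{\sqrt{N(N-1)}}$ and $W_{r,\Lambda}$ replaced by $U_{\mathbf x_j,\Lambda}$ — vanishes \emph{identically} by the orthogonality $\langle Z_{j,2},\phi\rangle=0$: because $Z_{j,\ell}$ solves \eqref{lblin}, the inner product in \eqref{eq.intermediate_nonlinear_form} coincides (up to a constant) with $\int_{\mathbb R^N_+}\nabla Z_{j,\ell}\cdot\nabla\phi$, which is exactly the combination appearing in $J''(W_{r,\Lambda})$. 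Only after this cancellation do the remaining pieces, driven by $K-1$, $H-\frac{\mathfrak D}{\sqrt{N(N-1)}}$ and the cross-terms $W_{r,\Lambda}-U_{\mathbf x_j,\Lambda}$, drop to $O(k\mu^{-\mathfrak m-\sigma})$.

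A related caution: if instead you substitute the Lagrange identity into $J'(W_{r,\Lambda}+\phi)[\partial_\Lambda W_{r,\Lambda}]$ (as your phrasing suggests), you obtain $\sum_\ell\mathfrak c_\ell\,\cdot(\text{pairing of order }k)\sim k\,\mathfrak c_2$. With only $|\mathfrak c_\ell|\lesssim\mu^{-\mathfrak m/2-\sigma}$ this does not produce the claimed error; the matrix identity $\partial_\Lambda F=\sum_\ell M_{2,\ell}\mathfrak c_\ell$ and the geometric expansion of Proposition~\ref{der} are complementary facts, and one cannot be derived from the other without circularity. You must keep the $J'(W_{r,\Lambda})[\partial_\Lambda W_{r,\Lambda}]$ piece explicit and deal with the $J''$ correction via the orthogonality cancellation just described.
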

 \section{Proof of Theorem \ref{principale}}
 Since
\begin{align*}
  \left|\mathbf{x}_{j}-\mathbf{x}_{1}\right|=2\left|\mathbf{x}_{1}\right| \sin \frac{(j-1) \pi}{k}, \quad j=1, \ldots, k,
\end{align*}
we have
\begin{align*}
\begin{aligned}
& \sum_{j=2}^{k} \frac{1}{\left|\mathbf{x}_{j}-\mathbf{x}_{1}\right|^{N-2}}=\frac{1}{\left(2\left|\mathbf{x}_{1}\right|\right)^{N-2}} \sum_{j=2}^{k} \frac{1}{\left(\sin \frac{(j-1) \pi}{k}\right)^{N-2}} \\
& \quad= \begin{cases}\frac{1}{\left(2\left|\mathbf{x}_{1}\right|\right)^{N-2}} \sum_{j=2}^{\frac{k}{2}} \frac{1}{\left(\sin \frac{(j-1) \pi}{k}\right)^{N-2}}+\frac{1}{\left(2\left|\mathbf{x}_{1}\right|\right)^{N-2}}, & \text{if}~ k ~\text{is even}, \\
\frac{1}{\left(2\left|\mathbf{x}_{1}\right|\right)^{n-2}} \sum_{j=2}^{ \lfloor \frac{k}{2}\rfloor} \frac{1}{\left(\sin \frac{(j-1) \pi}{k}\right)^{N-2}}, & \text {if}~ k ~\text{is odd},\end{cases}
\end{aligned}
\end{align*}
and
\begin{align*}
  0<c' \leq \frac{\sin \frac{(j-1) \pi}{k}}{\frac{(j-1) \pi}{k}} \leq c'', \quad j=2, \ldots,\left\lfloor\frac{k}{2}\right\rfloor,
\end{align*}
where
$$\lfloor x\rfloor:=\max\{n\in\mathbb N:\,\,n\le x\}$$
and $c',c''$ are some positive constants.

So, there exists a constant $B_0>0$, such that
\begin{align*}
\sum_{j=2}^{k} \frac{1}{\left|\mathbf{x}_{j}-\mathbf{x}_{1}\right|^{N-2}}=\frac{B_0 k^{N-2}}{\left|\mathbf{x}_{1}\right|^{N-2}}+O\left(\frac{k}{\left|\mathbf{x}_{1}\right|^{N-2}}\right).
\end{align*}

Thus, by Propositions \ref{reduced} and \ref{der}, we have
$$\begin{aligned} F(r, \Lambda)=&k\left[A-B_1\frac{k^{N-2}}{\Lambda^{N-2}r^{N-2}}-\frac{1}{2^*}\frac{c_0}{\Lambda^m \mu^m}\mathfrak d_{3, N}+(N-2)\frac{d_0}{\Lambda^n \mu^n}\mathfrak d_{5, N}\right]\\
&+k\left[-\frac{m(m-1)c_0}{2\cdot2^*}\mathfrak d_{4, N}\frac{(\mu r_0-r)^2}{\Lambda^{m-2}\mu^m}+\frac{n(n-1)(N-2)d_0}{2}\mathfrak d_{6, N}\frac{(\mu r_0-r)^2}{\Lambda^{n-2}\mu^n}\right]\\
&+\mathcal O\left(\frac{k}{\mu^{\mathfrak m+\sigma}}\right)  +\mathcal O\left(\frac{k}{\mu^{\mathfrak m}}|\mu r_0-r|^{2+\theta}\right)+\mathcal O\left(\frac{k^2}{r^{N-2}}\right)
\end{aligned}$$
where $B_1=B\cdot B_0>0$
and
$$\begin{aligned} \frac{\partial F(r, \Lambda)}{\partial\Lambda}=&k\left[B_1\frac{(N-2)k^{N-2}}{\Lambda^{N-1}r^{N-2}}+\frac{1}{2^*}\frac{c_0m}{\Lambda^{m+1} \mu^m}\mathfrak d_{3, N}-(N-2)\frac{d_0n}{\Lambda^{n+1} \mu^n}\mathfrak d_{5, N}\right]\\
&+\mathcal O\left(\frac{k}{\mu^{\mathfrak m+\sigma}}\right)  +\mathcal O\left(\frac{k}{\mu^{\mathfrak m}}|\mu r_0-r|^{2}\right)+\mathcal O\left(\frac{k^2}{r^{N-2}}\right).
\end{aligned}$$
Now, if $\mathfrak m:=m<n$ then we have that the reduced functional and its derivative become
$$\begin{aligned} F_m(r, \Lambda)=&k\left[A-B_1\frac{k^{N-2}}{\Lambda^{N-2}r^{N-2}}-\frac{1}{2^*}\frac{c_0}{\Lambda^m \mu^m}\mathfrak d_{3, N}-\frac{m(m-1)c_0}{2\cdot2^*}\mathfrak d_{4, N}\frac{(\mu r_0-r)^2}{\Lambda^{m-2}\mu^m}\right]\\
&+\mathcal O\left(\frac{k}{\mu^{m+\sigma}}\right)  +\mathcal O\left(\frac{k}{\mu^{m}}|\mu r_0-r|^{2+\theta}\right)+\mathcal O\left(\frac{k^2}{r^{N-2}}\right)
\end{aligned}$$

and
$$\begin{aligned} \frac{\partial F_m(r, \Lambda)}{\partial\Lambda}=&k\left[B_1\frac{(N-2)k^{N-2}}{\Lambda^{N-1}r^{N-2}}+\frac{1}{2^*}\frac{c_0m}{\Lambda^{m+1} \mu^m}\mathfrak d_{3, N}\right]\\
&+\mathcal O\left(\frac{k}{\mu^{m+\sigma}}\right)  +\mathcal O\left(\frac{k}{\mu^{m}}|\mu r_0-r|^{2}\right)+\mathcal O\left(\frac{k^2}{r^{N-2}}\right).
\end{aligned}$$
Assume $c_0<0$ and  let $\Lambda_{0, m}$ be the solution of
\[
B_1\frac{(N-2)}{\Lambda^{N-1}r^{N-2}_0}+\frac{1}{2^*}\frac{c_0m}{\Lambda^{m+1} }\mathfrak d_{3, N}=0,
\]
that is
\[
\Lambda_{0, m}=\left(\frac{2^*B_1(N-2)}{-c_0m \mathfrak d_{3, N}r_{0}^{N-2}}\right)^{\frac{1}{N-2-m}}.
\]

Now, if  $\mathfrak m=n<m$ then we have that the reduced functional and its derivative become
$$\begin{aligned} F_n(r, \Lambda)=&k\left[A-B_1\frac{k^{N-2}}{\Lambda^{N-2}r^{N-2}}+(N-2)\frac{d_0}{\Lambda^n \mu^n}\mathfrak d_{5, N}+\frac{n(n-1)(N-2)d_0}{2}\mathfrak d_{6, N}\frac{(\mu r_0-r)^2}{\Lambda^{n-2}\mu^n}\right]\\
&+\mathcal O\left(\frac{k}{\mu^{n+\sigma}}\right)  +\mathcal O\left(\frac{k}{\mu^{n}}|\mu r_0-r|^{2+\theta}\right)+\mathcal O\left(\frac{k^2}{r^{N-2}}\right)
\end{aligned}$$

and
$$\begin{aligned} \frac{\partial F_n(r, \Lambda)}{\partial\Lambda}=&k\left[B_1\frac{(N-2)k^{N-2}}{\Lambda^{N-1}r^{N-2}}-(N-2)\frac{d_0 n}{\Lambda^{n+1} \mu^n}\mathfrak d_{5, N}\right]\\
&+\mathcal O\left(\frac{k}{\mu^{n+\sigma}}\right)  +\mathcal O\left(\frac{k}{\mu^{n}}|\mu r_0-r|^{2}\right)+\mathcal O\left(\frac{k^2}{r^{N-2}}\right).
\end{aligned}$$
Assume $d_0>0$ and  let $\Lambda_{0, n}$ be the solution of
\[
B_1\frac{(N-2)}{\Lambda^{N-1}r^{N-2}_0}-(N-2)\frac{d_0 n}{\Lambda^{n+1} }\mathfrak d_{5, N}=0,
\]
that is
\[
\Lambda_{0, n}=\left(\frac{B_1}{d_0n \mathfrak d_{5, N}r_{0}^{N-2}}\right)^{\frac{1}{N-2-n}}.
\]

Finally, if $\mathfrak m=m=n$ then the reduced functional is
$$\begin{aligned} F_{\mathfrak m}(r, \Lambda)=&k\left[A-B_1\frac{k^{N-2}}{\Lambda^{N-2}r^{N-2}}+B_2\frac{1}{\Lambda^{\mathfrak m} \mu^{\mathfrak m}}+ B_3\frac{(\mu r_0-r)^2}{\Lambda^{\mathfrak m-2}\mu^{\mathfrak m}}\right]\\
&+\mathcal O\left(\frac{k}{\mu^{\mathfrak m+\sigma}}\right)  +\mathcal O\left(\frac{k}{\mu^{\mathfrak m}}|\mu r_0-r|^{2+\theta}\right)+\mathcal O\left(\frac{k^2}{r^{N-2}}\right)
\end{aligned}$$
where 
\begin{equation}\label{B2}
B_2:=-\frac{c_0}{2^*}\mathfrak d_{3, N}+d_0(N-2)\mathfrak d_{5, N}
\end{equation}
and
\begin{equation}\label{B3}
B_3:=-\frac{\mathfrak m(\mathfrak m-1)c_0}{2\cdot 2^*}\mathfrak d_{4, N}+\frac{\mathfrak m(\mathfrak m-1)d_0(N-2)}{2}\mathfrak d_{6, N}.
\end{equation}
and
$$\begin{aligned} \frac{\partial F_{\mathfrak m}(r, \Lambda)}{\partial\Lambda}=&k\left[B_1\frac{(N-2)k^{N-2}}{\Lambda^{N-1}r^{N-2}}-B_2\frac{\mathfrak m}{\Lambda^{\mathfrak m+1} \mu^{\mathfrak m}}\right]\\
&+\mathcal O\left(\frac{k}{\mu^{\mathfrak m+\sigma}}\right)  +\mathcal O\left(\frac{k}{\mu^{\mathfrak m}}|\mu r_0-r|^{2}\right)+\mathcal O\left(\frac{k^2}{r^{N-2}}\right).
\end{aligned}$$
Since $B_2>0$ (see Remark \ref{ossprimo}), let $\Lambda_{0, \mathfrak m}$ be the solution of
\[
B_1\frac{(N-2)}{\Lambda^{N-1}r^{N-2}_0}-\frac{B_2\mathfrak m}{\Lambda^{\mathfrak m+1} }=0,
\]
that is
\[
\Lambda_{0, \mathfrak m}=\left(\frac{B_1(N-2)}{B_2 \mathfrak m r_{0}^{N-2}}\right)^{\frac{1}{N-2-\mathfrak m}}.
\]
In any case the structure of $F_j(r, \Lambda)$ with $j=m$ or $j=n$ or $j=\mathfrak m$ is the same. So we let
$$\begin{aligned} F_{j}(r, \Lambda)&=k\left[A-B_1\frac{k^{N-2}}{\Lambda^{N-2}r^{N-2}}+\frac{A_1}{\Lambda^{j} \mu^{j}}+ A_2\frac{(\mu r_0-r)^2}{\Lambda^{j-2}\mu^{j}}\right]\\
&+\mathcal O\left(\frac{k}{\mu^{j+\sigma}}\right)  +\mathcal O\left(\frac{k}{\mu^{j}}|\mu r_0-r|^{2+\theta}\right)+\mathcal O\left(\frac{k^2}{r^{N-2}}\right)
\end{aligned}$$ and
$$\begin{aligned} \frac{\partial F_j(r, \Lambda)}{\partial\Lambda}=&k\left[B_1\frac{(N-2)k^{N-2}}{\Lambda^{N-1}r^{N-2}}-\frac{A_1 j }{\Lambda^{j+1} \mu^j}\right]+\mathcal O\left(\frac{k}{\mu^{j+\sigma}}\right)  +\mathcal O\left(\frac{k}{\mu^{j}}|\mu r_0-r|^{2}\right)+\mathcal O\left(\frac{k^2}{r^{N-2}}\right).
\end{aligned}$$
where 
$$A_1=\left\{\begin{aligned}&-\frac{c_0}{2^*}\mathfrak d_{3, N}\quad &\mbox{if}\,\, j=m\\
&(N-2)d_0\mathfrak d_{5, N}\quad &\mbox{if}\,\, j=n\\
& B_2\quad &\mbox{if}\,\, j=\mathfrak m.\end{aligned}\right.\qquad A_2=\left\{\begin{aligned}&-\frac{m(m-1)c_0}{2\cdot 2^*}\mathfrak d_{4, N}\quad &\mbox{if}\,\, j=m\\
&\frac{n(n-1)(N-2)d_0}{2}\mathfrak d_{6, N}\quad &\mbox{if}\,\, j=n\\
& B_3\quad &\mbox{if}\,\, j=\mathfrak m.\end{aligned}\right.$$
Define
\begin{align*}
  \mathtt D_j=\left\{(r, \Lambda): r \in\Bigl[\mu r_{0}-\frac{1}{\mu^{\bar{\theta}}}, \mu r_{0}+\frac{1}{\mu^{\bar{\theta}}}\Bigr], ~ \Lambda \in\Bigl[\Lambda_{0, j}-\frac{1}{\mu^{\frac{3}{2} \bar{\theta}}}, \Lambda_{0, j}+\frac{1}{\mu^{\frac{3}{2} \bar{\theta}}}\Bigr]\right\},
\end{align*}
where $\bar{\theta}>0$ is a small constant and $j=m$ or $j=n$ or $j=\mathfrak m$ depending on the value of $m$ and $n$.\\

For any $(r, \Lambda) \in \mathtt D_j$, we have
\[
\frac{r}{\mu}=r_{0}+\mathcal O\left(\frac{1}{\mu^{1+\bar\theta}}\right),
\]
which implies $$r^{N-2}=\mu^{N-2}\left(r_0^{N-2}+\mathcal O\left(\frac{1}{\mu^{1+\bar\theta}}\right)\right).$$
Thus, we get
\begin{align}\label{eq.nearly_finished_energy_estimate}
  \begin{aligned}
    F_j(r, \Lambda)=& k\Bigg(A+\left(\frac{A_{1}}{\Lambda^{j}}-\frac{B_{1}}{\Lambda^{N-2} r_{0}^{N-2}}\right) \frac{1}{\mu^{j}}+\frac{A_{2}}{\Lambda^{j-2} \mu^{j}}\left(\mu r_{0}-r\right)^{2} \\
& +O\left(\frac{1}{\mu^{j+\sigma}}+\frac{1}{\mu^{j}}\left|\mu r_{0}-r\right|^{2+\bar\sigma}+\frac{k}{\mu^{N-2}}\right)\Bigg), \quad(r, \Lambda) \in \mathtt D_j ,
  \end{aligned}
\end{align}
and
\begin{align}\label{eq.nearly_finished_partial_energy_estimate}
\begin{aligned}
  \frac{\partial F_j(r, \Lambda)}{\partial \Lambda}= & k\left(-\frac{A_{1} j}{\Lambda^{j+1}}+\frac{B_{1}(N-2)}{\Lambda^{N-1} r_{0}^{N-2}}\right) \frac{1}{\mu^{j}}  +O\left(\frac{k}{\mu^{j+\sigma}}+\frac{k}{\mu^{j}}\left|\mu r_{0}-r\right|^{2}+\frac{k^2}{\mu^{N-2}}\right),\quad(r, \Lambda) \in \mathtt D_j.
\end{aligned}
\end{align}
\begin{proof}[Proof of Theorem \ref{principale}]
The existence of a critical point of $F_j(r, \Lambda)$ in $\mathtt D_j$ can be proved as in Propositions 3.3, 3.4 of \cite{WeiYan_2010a}.\\  Moreover, arguing as in \cite{WangZhao_2013}, the solution is also positive.\end{proof}
\begin{remark}\label{oss}
In this remark we will show that $B$ which is defined in \eqref{B} is positive.\\
First we define $$I_m^\alpha:=\int_0^{+\infty}\frac{\rho^\alpha}{(1+\rho^2)^m}\, d\rho\quad\mbox{for}\,\, \alpha+1<2m$$ and $$\varphi_m:=\int_{\mathfrak D}^{+\infty}\frac{1}{(t^2-1)^m}\, dt.$$
By using Lemma A.1 of \cite{Cruz-BlazquezPistoiaVaira_2022} we get that
$$\begin{aligned}\mathfrak d_{1, N}&:=\alpha_N^{\frac{2N}{N+2}}\int_{\mathbb R^N_+}\frac{1}{\left(|\bar z|^2+(z_N+\mathfrak D)^2-1\right)^{\frac{N+2}{2}}}\, dz=\alpha_N^{\frac{2N}{N+2}}\omega_{N-1}I_{\frac{N+2}{2}}^{N-2}\varphi_{\frac 32}\\
&=\left(4N(N-1)\right)^{\frac N 2}\omega_{N-1}I_{\frac N 2+1}^{N-2}\left(-1+\frac{\mathfrak D}{\sqrt{\mathfrak D^2-1}}\right)\end{aligned}$$
in view of the explicit computation of $\varphi_{\frac 32}$.\\ Moreover
$$\begin{aligned}\mathfrak d_{2, N}&:=\alpha_N^{\frac{2(N-1)}{N+2}}\int_{\partial\mathbb R^N_+}\frac{1}{\left(|\bar z|^2+\mathfrak D^2-1\right)^{\frac{N}{2}}}\, d\bar z=\alpha_N^{\frac{2(N-1)}{N+2}}\omega_{N-1}I_{\frac{N}{2}}^{N-2}\frac{1}{\sqrt{\mathfrak D^2-1}}\\
&=\left(4N(N-1)\right)^{\frac{ N-1}{ 2}}\omega_{N-1}N I_{\frac N 2+1}^{N-2}\frac{1}{\sqrt{\mathfrak D^2-1}}\end{aligned}$$
since $$I_{\frac N2}^{N-2}=N I_{\frac N 2+1}^{N-2}.$$ Then we compute $B$ defined in \eqref{B}.
$$\begin{aligned} B&=\left(4N(N-1)\right)^{\frac{ N-1}{ 2}}\omega_{N-1} I_{\frac N 2+1}^{N-2}\left[-\frac12\left(4N(N-1)\right)^{\frac{ 1}{ 2}}\left( -1+\frac{\mathfrak D}{\sqrt{\mathfrak D^2-1}}\right)+(N(N-1))^{\frac 12}\frac{\mathfrak D}{\sqrt{\mathfrak D^2-1}}\right]\\
&=\left(4N(N-1)\right)^{\frac{ N-1}{ 2}}(N(N-1))^{\frac 12}\omega_{N-1} I_{\frac N 2+1}^{N-2}=\frac{\alpha_N^{2^*}}{2}\omega_{N-1} I_{\frac N 2+1}^{N-2}>0.\end{aligned}$$
\end{remark}
\begin{remark}\label{ossprimo}
We remark also that if $c_0<0$ then $B_2>0$ and $B_3>0$. 
\end{remark}
\section{Basic Estimates}\label{appendice}
For each fixed $i$ and $j$, $i \neq j$, we consider the following function

\begin{equation}
g_{i j}(y)=\frac{1}{\left(1+\left|y-\mathbf{x}_{j}\right|\right)^{\alpha}} \frac{1}{\left(1+\left|y-\mathbf{x}_{i}\right|\right)^{\beta}} ,
\end{equation}
where $\alpha \geq 1$ and $\beta \geq 1$ are two constants.
We first introduce the following lemma, whose proof can be found in \cite[Appendix B]{WeiYan_2010a}.

\begin{lemma}\label{lemma.appendix_estimates_1}
  For any $\sigma\in(0,\min\{\alpha,\beta\})$, there is a constant $C>0$, such that
\begin{align}
  g_{i j}(y) \leq \frac{C}{\left|\mathbf{x}_{i}-\mathbf{x}_{j}\right|^{\sigma}}\left(\frac{1}{\left(1+\left|y-\mathbf{x}_{i}\right|\right)^{\alpha+\beta-\sigma}}+\frac{1}{{\left(1+\left|y-\mathbf{x}_{j}\right|\right)}^{\alpha+\beta-\sigma}}\right).
\end{align}
\end{lemma}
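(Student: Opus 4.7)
The plan is to exploit the triangle inequality to extract a factor $|\mathbf{x}_i-\mathbf{x}_j|^{-\sigma}$ from whichever of the two distances $|y-\mathbf{x}_i|$ or $|y-\mathbf{x}_j|$ is the larger, and then to absorb the leftover power into the factor centered at the closer point. This reduces the bound for the product to an essentially one-center estimate on each half of the case split.

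First I would record two elementary observations. Since $|y-\mathbf{x}_i|+|y-\mathbf{x}_j|\ge|\mathbf{x}_i-\mathbf{x}_j|$, at each $y$ at least one of the two distances satisfies $|y-\mathbf{x}_\bullet|\ge\tfrac12|\mathbf{x}_i-\mathbf{x}_j|$; combined with the trivial inequality $1+\tfrac{t}{2}\ge\tfrac{1+t}{2}$ this gives $1+|y-\mathbf{x}_\bullet|\ge\tfrac12(1+|\mathbf{x}_i-\mathbf{x}_j|)$ at that farther center. Together with $(1+s)^{-\sigma}\le s^{-\sigma}$ for $s>0$, this supplies the desired $|\mathbf{x}_i-\mathbf{x}_j|^{-\sigma}$ factor.

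Next I would split into two symmetric cases according to whether $|y-\mathbf{x}_i|\le|y-\mathbf{x}_j|$ or the reverse. In the first case the factor $(1+|y-\mathbf{x}_j|)^{-\alpha}$ is rewritten as $(1+|y-\mathbf{x}_j|)^{-(\alpha-\sigma)}(1+|y-\mathbf{x}_j|)^{-\sigma}$: the second piece is bounded by $C|\mathbf{x}_i-\mathbf{x}_j|^{-\sigma}$ by the observation above, while the first piece, using $\alpha-\sigma>0$ together with $|y-\mathbf{x}_i|\le|y-\mathbf{x}_j|$, is dominated by $(1+|y-\mathbf{x}_i|)^{-(\alpha-\sigma)}$. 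Multiplying by the remaining $(1+|y-\mathbf{x}_i|)^{-\beta}$ produces the contribution centered at $\mathbf{x}_i$ with the correct exponent $\alpha+\beta-\sigma$. The second case is handled identically, with the roles of $i$ and $j$ swapped and with $\beta-\sigma>0$ playing the analogous role, and yields the contribution centered at $\mathbf{x}_j$. Adding the two one-sided estimates gives the bound stated in the lemma.

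There is no genuine obstacle in this argument; it is a purely elementary manipulation. The only real decision is how to split the exponent, namely peeling off exactly $\sigma$ units from the distance to the farther of the two centers, and the hypotheses $\alpha,\beta\ge1$ and $0<\sigma<\min\{\alpha,\beta\}$ are used precisely to guarantee that the residual exponents $\alpha-\sigma$ and $\beta-\sigma$ remain positive so that the monotonicity step rebasing the larger distance onto the smaller one is valid.
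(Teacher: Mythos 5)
Your argument is correct: the case split according to which of $|y-\mathbf{x}_i|$, $|y-\mathbf{x}_j|$ is larger, peeling off exactly $\sigma$ from the exponent at the farther centre via $1+|y-\mathbf{x}_\bullet|\ge\tfrac12(1+|\mathbf{x}_i-\mathbf{x}_j|)$, and rebasing the residual power onto the nearer centre (valid since $\alpha-\sigma,\beta-\sigma>0$) yields exactly the stated bound. The paper itself gives no proof, referring instead to Appendix B of Wei--Yan, and your elementary triangle-inequality argument is essentially the same one used there, so there is nothing to add.
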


\begin{lemma}\label{lemma.appendix_estimates_2}

For any constant $\sigma\in(0,N-2)$, there is a constant $C>0$, such that
\begin{align}
  \int_{\mathbb{R}_{+}^{N}} \frac{1}{|y-z|^{N-2}} \frac{1}{(1+|z|)^{2+\sigma}} d z \leq \frac{C}{{(1+|y|)}^{\sigma}},
\end{align}
and
\begin{align}
  \int_{\partial \mathbb{R}_{+}^{N}} \frac{1}{|y-z|^{N-2}} \frac{1}{(1+|z|)^{1+\sigma}} d \bar{z} \leq \frac{C}{{(1+|y|)}^{\sigma}}.
\end{align}
\end{lemma}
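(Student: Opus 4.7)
The plan is to establish both estimates via a standard three-region decomposition of the integration domain based on the position of $z$ relative to $y$ and to the origin. Without loss of generality I would reduce to the case $|y|\geq 1$, since when $|y|\leq 1$ the right-hand side $C(1+|y|)^{-\sigma}$ is bounded below by a positive constant and both integrals are clearly finite thanks to $\sigma\in(0,N-2)$.

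For the first estimate I would partition $\mathbb{R}^N_+$ into $D_1=\{z:|z-y|\leq|y|/2\}$, $D_2=\{z:|z|\leq|y|/2\}$, and $D_3=\{z:|z-y|>|y|/2,\ |z|>|y|/2\}$. In $D_1$ one has $|z|\sim|y|$, so the weight is bounded by $|y|^{-(2+\sigma)}$, while spherical integration gives $\int_{D_1}|z-y|^{-(N-2)}\,dz\lesssim|y|^2$, producing the contribution $|y|^{-\sigma}$. In $D_2$ one has $|z-y|\geq|y|/2$, so the kernel is $\lesssim|y|^{-(N-2)}$, and $\int_{|z|\leq|y|/2}(1+|z|)^{-(2+\sigma)}\,dz\lesssim|y|^{N-2-\sigma}$ (this is exactly where the hypothesis $\sigma<N-2$ enters), again yielding $|y|^{-\sigma}$. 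In $D_3$ the defining inequalities force $|z|\leq 3|z-y|$ by the triangle inequality, hence $|z-y|^{-(N-2)}\lesssim(1+|z|)^{-(N-2)}$, and the integral reduces to $\int_{|z|\geq|y|/2}(1+|z|)^{-(N+\sigma)}\,dz\lesssim|y|^{-\sigma}$.

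The boundary estimate would follow by the same partitioning restricted to $\partial\mathbb{R}^N_+\cong\mathbb{R}^{N-1}$. The fact that the integration manifold is one dimension lower is compensated exactly by the change of weight exponent from $2+\sigma$ to $1+\sigma$: in $D_1\cap\partial\mathbb{R}^N_+$ a direct computation (using $|\bar z-y|^2=|\bar z-\bar y|^2+y_N^2$) gives $\int|\bar z-y|^{-(N-2)}\,d\bar z\lesssim|y|$, one lower power of $|y|$ than in the full-space case, which combined with the weight bound $|y|^{-(1+\sigma)}$ still produces $|y|^{-\sigma}$; analogous bookkeeping handles $D_2\cap\partial\mathbb{R}^N_+$ and $D_3\cap\partial\mathbb{R}^N_+$, again using $\sigma<N-2$ in the analogue of $D_2$.

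The main difficulty is purely the exponent arithmetic; the hypothesis $\sigma\in(0,N-2)$ is precisely what forces each region to yield the sharp decay rate $|y|^{-\sigma}$ and prevents logarithmic losses at the endpoint $\sigma=N-2$ (arising in $D_2$). Beyond this bookkeeping, the argument is a routine Riesz-potential-type computation, and one could alternatively quote it from the analogous estimate in \cite{WeiYan_2010a} for the full-space kernel combined with a reflection across $\partial\mathbb{R}^N_+$.
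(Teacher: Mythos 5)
Your proposal is correct. The paper does not give its own proof of this lemma; it simply notes that the estimates are well known and points to Lemma B.2 of Wei--Yan and Lemma A.4 of Wang--Zhao, so there is no in-paper argument to compare against. What you have written is essentially the standard Riesz-potential decomposition that those references employ: in $D_1$ the weight is frozen at $|y|^{-(2+\sigma)}$ while the singular kernel integrates to $|y|^2$; in $D_2$ the kernel is frozen at $|y|^{-(N-2)}$ while the weight integrates to $|y|^{N-2-\sigma}$, and here the hypothesis $\sigma<N-2$ is exactly what makes the growth rate a positive power of $|y|$ rather than a constant or a logarithm; in $D_3$ the comparison $|z|\le 3|z-y|$ (a consequence of $|y|<2|z-y|$) lets you replace the kernel by $(1+|z|)^{-(N-2)}$ and integrate the convergent tail $(1+|z|)^{-(N+\sigma)}$. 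The boundary estimate goes through by the same scheme, with the drop in integration dimension from $N$ to $N-1$ matched by the drop in the weight exponent from $2+\sigma$ to $1+\sigma$, precisely as you observe. The only point worth spelling out in a full write-up is the case $y_N>|y|/2$ for the boundary estimate: there $D_1\cap\partial\mathbb{R}^N_+$ is empty (any boundary point $z$ satisfies $|z-y|\ge y_N>|y|/2$), so the $D_1$ contribution vanishes and only $D_2$, $D_3$ remain; with that remark the argument is complete.
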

The result is well known. See also \cite[Lemma B.2]{WeiYan_2010a} and \cite[Lemma A.4]{WangZhao_2013}.

\begin{lemma}\label{lemma.appendix_estimates_3}
  Suppose that $N\geq 5$ and $\tau_1,\tau_2 \in \left(0,\frac{3}{2}\right)$. Then there exist small $\theta_1,\theta_2>0$, such that
  \begin{align*}
    \begin{aligned}
      & \int_{\mathbb{R}_{+}^{N}} \frac{1}{|y-z|^{N-2}} W_{r, \Lambda}^{\frac{4}{N-2}}(z) \sum_{j=1}^{k} \frac{1}{\left(1+\left|z-\mathbf{x}_{j}\right|\right)^{\frac{N-2}{2}+\tau_1}} d z \leq C \sum_{j=1}^{k} \frac{1}{\left(1+\left|y-\mathbf{x}_{j}\right|\right)^{\frac{N-2}{2}+\tau_1+\theta_1}},
      \end{aligned}
  \end{align*}
  and   
  \begin{align*}
    \begin{aligned}
      & \int_{\partial\mathbb{R}_{+}^{N}} \frac{1}{|y-z|^{N-2}} W_{r, \Lambda}^{\frac{2}{N-2}}(z) \sum_{j=1}^{k} \frac{1}{\left(1+\left|z-\mathbf{x}_{j}\right|\right)^{\frac{N-2}{2}+\tau_2}} d \bar{z} \leq C \sum_{j=1}^{k} \frac{1}{\left(1+\left|y-\mathbf{x}_{j}\right|\right)^{\frac{N-2}{2}+\tau_2+\theta_2}}.
      \end{aligned}
  \end{align*}

\end{lemma}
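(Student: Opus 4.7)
The plan is to reduce both estimates to Lemmas~\ref{lemma.appendix_estimates_1}--\ref{lemma.appendix_estimates_2} together with the combinatorial bound \eqref{eq.lemma.appendix_estimates_3_temp}. The interior and boundary inequalities are parallel, so I describe the interior case in detail and indicate the modifications for the boundary one.

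First I produce a pointwise upper bound for $W_{r,\Lambda}^{4/(N-2)}$. From $U_{\mathbf{x}_i,\Lambda}(z)\le C(1+|z-\mathbf{x}_i|)^{-(N-2)}$, for $N\ge 6$ the exponent $p:=\frac{4}{N-2}\in(0,1]$ is subadditive on sums of nonnegative terms, so
\[
W_{r,\Lambda}^{4/(N-2)}(z)\le \sum_{i=1}^k U_{\mathbf{x}_i,\Lambda}^{4/(N-2)}(z)\le C\sum_{i=1}^k\frac{1}{(1+|z-\mathbf{x}_i|)^{4}}.
\]
For $N=5$ we have $p=\frac43>1$ and this step fails, but the function $M(z):=\sum_i(1+|z-\mathbf{x}_i|)^{-(N-2)}$ is uniformly bounded in $k$ and $z$: splitting into the sectors $\Omega_\ell$, for $z\in\Omega_\ell$ and $i\ne\ell$ we have $|z-\mathbf{x}_i|\ge\tfrac12|\mathbf{x}_i-\mathbf{x}_\ell|$ and \eqref{eq.lemma.appendix_estimates_3_temp} with $\beta_0=N-2>1$ shows $\sum_{i\ne\ell}|\mathbf{x}_i-\mathbf{x}_\ell|^{-(N-2)}=o(1)$. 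Hence $W^{p}\le CM^{p}\le C'\|M\|_{\infty}^{p-1}M\le C''M$, giving $W_{r,\Lambda}^{4/(N-2)}(z)\le C\sum_i(1+|z-\mathbf{x}_i|)^{-(N-2)}$. Setting $\gamma:=\min(4,N-2)\ge 3$, the uniform outcome is $W_{r,\Lambda}^{4/(N-2)}(z)\le C\sum_i(1+|z-\mathbf{x}_i|)^{-\gamma}$.

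Second, I decouple the resulting double sum via Lemma~\ref{lemma.appendix_estimates_1}. The integrand becomes a sum over $(i,j)$ of products $(1+|z-\mathbf{x}_i|)^{-\gamma}(1+|z-\mathbf{x}_j|)^{-(\frac{N-2}2+\tau_1)}$. The diagonal $i=j$ is a single-centre weight of exponent $\gamma+\tfrac{N-2}2+\tau_1$. For $i\ne j$, Lemma~\ref{lemma.appendix_estimates_1} with a small fixed $\sigma_0\in(0,1)$ produces the factor $|\mathbf{x}_i-\mathbf{x}_j|^{-\sigma_0}$ times two single-centre weights of exponent $\gamma+\tfrac{N-2}2+\tau_1-\sigma_0$. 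Summing $|\mathbf{x}_i-\mathbf{x}_j|^{-\sigma_0}$ over $j\ne i$ via \eqref{eq.lemma.appendix_estimates_3_temp} with $\beta_0=\sigma_0<1$ gives an $O(1)$ factor, so the integrand is controlled by $C\sum_i(1+|z-\mathbf{x}_i|)^{-a}$ with $a:=\gamma+\tfrac{N-2}2+\tau_1-\sigma_0$.

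Third, I apply Lemma~\ref{lemma.appendix_estimates_2} to each summand: the convolution is bounded by $C(1+|y-\mathbf{x}_i|)^{-\min(a-2,\,N-2)}$. In the subcritical range $a<N$ this yields the target with $\theta_1=\gamma-2-\sigma_0\ge 1-\sigma_0>0$; in the supercritical range $a\ge N$ the output decays like $(1+|y-\mathbf{x}_i|)^{-(N-2)}$, which beats the target with $\theta_1=\tfrac{N-2}2-\tau_1>0$ since $\tau_1\in(0,\tfrac32)\subset(0,\tfrac{N-2}2)$ for $N\ge 5$. The boundary estimate is proved by the same three steps, using $W_{r,\Lambda}^{2/(N-2)}(z)\le C\sum_i(1+|z-\mathbf{x}_i|)^{-2}$ (subadditivity is trivial here since $2/(N-2)\le 1$ for $N\ge 4$), a product exponent $2+\tfrac{N-2}2+\tau_2-\sigma_0$, and the boundary version of Lemma~\ref{lemma.appendix_estimates_2} that shifts exponents by $1$ instead of $2$. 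The main technical issue is the uniform-in-$N\ge 5$ handling of this subcritical/supercritical dichotomy in Step~3, and it is precisely the hypothesis $\tau_i<\tfrac32$ together with $N\ge 5$ that guarantees strict positivity of $\theta_i$ in the saturated (supercritical) regime.
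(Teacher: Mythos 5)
Your argument is correct in substance, and it is the standard one that the paper leaves implicit: Lemma \ref{lemma.appendix_estimates_3} is stated without proof (the analogous estimates in the cited works of Wei--Yan and Wang--Zhao are proved exactly this way), namely pointwise control of $W_{r,\Lambda}^{4/(N-2)}$, resp. $W_{r,\Lambda}^{2/(N-2)}$, by a single sum of bubble weights, decoupling of the resulting double sum through Lemma \ref{lemma.appendix_estimates_1}, and then the convolution estimates of Lemma \ref{lemma.appendix_estimates_2}. Your treatment of the case $N=5$ (where $4/(N-2)>1$, using the uniform boundedness of $\sum_i(1+|z-\mathbf{x}_i|)^{-(N-2)}$) is a legitimate way to get the exponent $\gamma=\min\{4,N-2\}\ge3$.

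Two points need tightening, both fixable in one line. First, the factor $\sum_{j\ne i}|\mathbf{x}_i-\mathbf{x}_j|^{-\sigma_0}$ is \emph{not} $O(1)$ for an arbitrarily small $\sigma_0$: it behaves like $k\,\mu^{-\sigma_0}=k^{1-\sigma_0\frac{N-2}{N-2-\mathfrak m}}$, and \eqref{eq.lemma.appendix_estimates_3_temp} is only stated for $\beta_0\ge\frac{N-2-\mathfrak m}{N-2}$, in which range (and only there) the case $\beta_0<1$ gives a bounded quantity. So ``small fixed $\sigma_0\in(0,1)$'' must be replaced by a choice of $\sigma_0$ in the window $\left[\frac{N-2-\mathfrak m}{N-2},\,\min\{1,\gamma-2\}\right)$, which is nonempty since $\mathfrak m\ge2$ and $\gamma\ge3$; with such a $\sigma_0$ the rest of your computation is unchanged and still yields $\theta_1=\gamma-2-\sigma_0>0$ and $\theta_2=1-\sigma_0>0$. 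Second, Lemma \ref{lemma.appendix_estimates_2} requires $\sigma<N-2$ strictly, so in your ``supercritical'' regime the convolution only gives decay $(1+|y-\mathbf{x}_i|)^{-(N-2-\epsilon)}$ rather than $-(N-2)$; since $\tau_i<\tfrac32\le\tfrac{N-2}{2}$ for $N\ge5$, this still dominates the target provided $\theta_i$ is taken slightly smaller than $\tfrac{N-2}{2}-\tau_i$, which is all the lemma asks. With these two adjustments your proof is complete and matches the intended (omitted) argument.
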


\end{document}